\def \ii{{\relax\ifmmode \mathrm{i} \else i \fi}}
\def \ee{{\relax\ifmmode \mathrm{e} \else e \fi}}
\def \dd{{\relax\ifmmode \mathrm{d} \else d \fi}}
\def \g{{\relax\ifmmode \mathrm{g} \else g \fi}}
\def \C{\mathbb C}
\def \N{\mathbb N}
\theoremstyle{plain}
\newtheorem{satz}{Theorem}[section]
\newtheorem{lemma}[satz]{Lemma} 
\newtheorem{Corollary}[satz]{Corollary}
\newtheorem{proposition}[satz]{Proposition}
\theoremstyle{definition}
\newtheorem{beispiel}{Example}[section]
\newtheorem{definition}{Definition}[section]
\newtheorem{notiz}{Remark}[section]
\renewcommand{\leq}{\leqslant}
\renewcommand{\geq}{\geqslant}
\title{Optimal Parametrizations and Valuations}
\author[H\"ubl]{Reinhold H\"ubl}
\email{Reinhold.Huebl@dhbw.de}
\address{Zentrum f\"ur mathematisch--naturwissenschaftliches Basiswissen, DHBW Mannheim, 68163 Mannheim, Germany}
\address{}
\author[Huneke]{Craig Huneke}
\email{clh4xd@virginia.edu}
\address{Department of Mathematics, University of Virginia, Charlottesville, VA 22904-4135, USA}
\author[Maitra]{Sarasij Maitra}
\email{smaitra2@haverford.edu}
\address{Department of Mathematics and Statistics, Haverford College, 370 Lancaster Avenue, Haverford, PA 19041}
\author[Mukundan]{Vivek Mukundan}
\email{vmukunda@iitd.ac.in}
\address{Department of Mathematics, Indian Institute of Technology Delhi, Hauz Khas, Delhi - 110016, India.}
\subjclass[2010]{Primary: 13A15. Secondary: 13H05}
\keywords{}
\begin{document}
\begin{abstract}
    This article discusses a way for uniquely setting up the valuations for the minimal generators of the maximal ideal of a one dimensional complete reduced and irreducible local algebra over an algebraically closed field, when treated as a subring of its integral closure. Our observations are a generalization of the more well-studied case of a numerical semigroup ring. These results provide completion to some missing arguments in certain proofs present in the existing literature, including some results concerning a long-standing conjecture of R. Berger.
\end{abstract}
\maketitle
\section{Introduction}
The study of one-dimensional complete local domains has long been central to commutative algebra and singularity theory. Such rings arise naturally as localizations of curve singularities and exhibit rich interactions between their algebraic invariants and the geometry of the underlying space. Among the fundamental objects attached to such a ring $(R,\mathfrak{m})$ are the valuations of elements of its maximal ideal (when viewed as a subring of its integral closure). Many results in the literature demonstrate that valuations serve as a useful tool to extract  information about the ring $R$ (see for example \cite{Kunz70,HK,matsuoka1971degree,barucci1997maximality,d1997canonical,barucci2000analytically,oneto2007invariants,maitra2023extremal,herzog2023tiny,maitra2024two,kobayashi2025set}).

A particularly well-studied case arises when $R$ is a numerical semigroup ring, i.e., a subring of $k[[t]]$ generated by monomials $t^{w_1}, \dots, t^{w_n}$ with $\gcd(w_1,\dots,w_n)=1$. In this situation the valuation semigroup $V(R)$ of $R$ is generated by $\{w_1, \dots, w_n\}$, and the structure of $R$ can be described explicitly in terms of this information. This exchange between the algebra of $R$ and the combinatorics of its semigroup $V(R)$ has been explored in classical work of Herzog and Kunz \cite{HK} and in subsequent literature (see for instance \cite{barucci2006numerical,barucci2009propinquity,barucci2010,rosales2009irreducible,stamate2016betti,goto2018pseudo,endo2022ulrich,bhardwaj2024reduced,celikbas2024semidualizing}). However, for general one-dimensional complete local domains, where the embedding dimension and valuations are not controlled by a numerical semigroup, the picture is far less understood. In particular, although any such ring $R$ admits a parametrization of the form  $R = k[[\alpha_1 t^{a_1}, \dots, \alpha_n t^{a_n}]] \subseteq k[[t]]$, with units $\alpha_i \in k[[t]]$ and integers $a_i$, there is in general no canonical choice of the exponents $a_i$. Their algebraic significance remains subtle and much less understood, except in the case of plane curve singularities, where classical work of Zariski \cite{Zariskibook} provides a clear interpretation.

The purpose of this article is to introduce a canonical and optimal way to assign valuations to the minimal generators of $\mathfrak{m}$, thereby resolving this ambiguity. We prove that the sequence of integers $\{a_1, \dots, a_n\} = v(\mathfrak{m}) \setminus v(\mathfrak{m}^2)$, where $v$ is the $t$--adic valuation on $K = Q(R)$ possesses two key properties (by which it is determined uniquely): its cardinality equals the embedding dimension of $R$, and for any choice of elements $x_1,\dots,x_n \in R$ with $v(x_i)=a_i$, one has $R = k[[x_1,\dots,x_n]]$. We call this sequence the \emph{Herzog--Kunz sequence} of $R$, and the corresponding generators the \emph{Herzog--Kunz generators}. This characterization generalizes the classical semigroup case and strengthens the framework developed by Herzog and Kunz. In particular, it provides a new canonical description of one-dimensional complete local domains in terms of valuations, resolving longstanding ambiguities in their parametrization.

The consequences of this description are advantageous. First, the Herzog--Kunz sequence can be constructed inductively, beginning with the multiplicity of $R$. Second, it is always contained in the set of minimal generators of the value semigroup $V(R)$, and in the case of numerical semigroup rings the two coincide. Concrete examples demonstrate that the inclusion can be strict, thereby highlighting the finer structural information encoded by the Herzog--Kunz sequence. These results complete certain missing arguments in the literature and sharpen previously known theorems.

One important application of the work concerns torsion in the module of differentials. Correcting the proof on prior work \cite{HMM}, we show that if $f \in R$ has valuation greater than the largest element of the Herzog--Kunz sequence, then $f \in \mathfrak{m}^2$. This yields a simple criterion for when the conductor ideal $\mathfrak{C}_R$ is contained in $\mathfrak{m}^2$. Moreover, when this containment fails, one can explicitly construct torsion in the module of universally finite K\"ahler differentials $\widetilde{\Omega}^1_{R/k}$ (cf.\ \cite[\S 11]{KD}, ), sharpening the proof of this known result from \cite{HMM}. These arguments fill gaps in existing proofs and establish a clearer link between valuations, conductor ideals, and torsion $\tau(\widetilde{\Omega}^1_{R/k})$ of the module of differentials $\widetilde{\Omega}^1_{R/k}$.

In the final part of the paper we investigate the flexibility of parametrizations. While the Herzog--Kunz sequence furnishes an optimal valuation profile, arbitrary minimal generating sets of $\mathfrak{m}$ need not display this structure. Nevertheless, we show that such sets can always be modified in sufficiently high degrees to recover the canonical valuations. Furthermore, we prove that generators can be truncated at order $\max\{a_n+1, c_R\}$, where $c_R$ is the conductor degree, without altering the ring. In particular, every such $R$ can be generated by polynomials in the parameter $t$. This result both improves bounds available in earlier work and underscores the rigidity of the canonical parametrization.

We summarize the sections here. In the first section, we provide a canonical characterization of valuations of minimal generators in one-dimensional complete local domains via the Herzog--Kunz sequence. In the second section, we establish applications to torsion in differentials and conductor ideals, yielding more constructions which could help build evidence toward Berger’s conjecture. In the third section, we prove that arbitrary generating sets may be modified or truncated to recover the canonical parametrizations, thereby ensuring polynomial generation. 

\section{Main Result}

Let $k$ be a field, let $(R, \mathfrak m)$ be a complete 
local noetherian domain of dimension $1$, containing $k$ 
with $R/\mathfrak {m} = k$ and field of 
fractions $K = Q(R)$, and let $\overline{R}$ be its normalization. Then $\overline{R}$ 
is a complete regular local noetherian and equicharacteristic ring of dimension $1$. 
Denoting by $\overline{\mathfrak m}$ its maximal ideal,  we will assume that 
$\overline{R}/\overline{\mathfrak m} = k$ (which will be the case, if $k$ is 
algebraically closed), hence we may  write  $\overline{R} = k[[t]]$ for 
some local parameter $t$ of $\overline{R}$ and
	$$ R \subseteq \overline{R} $$ 
is a finite birational extension (\cite[Theorem 4.3.4]{SH}), hence has a parametrization  
	\begin{equation}\label{eq:rep_general}
	 R = k[[\alpha_1 t^{a_1}, \ldots, \alpha_n t^{a_n} ]] \subseteq k[[t]] 
	\end{equation}
with integers $1 \leq a_1\leq \cdots \leq a_n$ and units $\alpha_i \in \overline{R}$, where 
$n = \mathrm{edim}(R)$ is the embedding dimension of $R$. 

By $\mathfrak C_R$ we denote the conductor ideal of $R$ and by $c_R$ the conductor degree of $R$ and by $v$ we 
denote the $t$--valuation on $K = k((t))$. By $V(R)$ we 
will denote the valuation semigroup of $R$ (with respect 
to the valuation $v$), i.e. 
	$$ V(R)=\{v(f)\, \vert\,\, f \in R \setminus \{0\}\} $$
Then $V(R)$ is a numerical semigroup, i.e., it has a unique 
set of minimal generators 
	$$ 0 < w_1 < w_2 < \cdots < w_N $$
for some $N > 0$ with $\mathrm{gcd}\{w_1, \ldots, w_N\} 
= 1$, and, by~\cite[(2.4)]{HK}, the conductor 
degree $c_R$  of $R$ satisfies 
	\begin{enumerate}
	\item $c_R - 1 \notin V(R)$. 
	\item $c_R+i \in V(R)$ for all $i \geq 0$
	\end{enumerate}
which in particular implies that 
	$$ \mathfrak C_R = t^{c_R} \cdot \overline{R} = (t^{c_R}, 
    t^{c_R+1}, \ldots , t^{c_R+a_1-1}) \cdot R  $$
In general not much can be said about the structure of the 
exponents $a_i$ in parametrization~\eqref{eq:rep_general} 
other than that 
	$$ a_1 = \mathrm{min}\{v(f)\, \vert \,\, f \in 
    \mathfrak m \} = \mathrm{mult}(R) $$
is the multiplicity of $R$, but with some care the set 
$\{a_1, \ldots, a_n\}$ can be chosen in 
a unique  and more meaningful way:

\begin{satz}\label{thm:exist_unique}
There exist unique integers $1 \leq a_1 < a_2 < \cdots 
< a_n$ with the following properties
\begin{enumerate}
\item $n = \mathrm{edim}(R)$. 
\item If $x_1, \ldots, x_n \in R$ are any elements with 
$v(x_i) = a_i$ then 
	$$ R = k[[x_1, \ldots, x_n]] $$
\end{enumerate}
The set $\{a_1, \ldots, a_n\}$ is given by 
	$$ \{a_1, \ldots, a_n\} = v(\mathfrak m) \setminus 
    v(\mathfrak m^2) $$
\end{satz}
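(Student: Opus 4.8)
The plan is to define the candidate set $S = v(\mathfrak m) \setminus v(\mathfrak m^2)$ and show both that it has the required cardinality and generation property and that these two properties force uniqueness. First I would check that $S$ is finite: since $\mathfrak C_R = t^{c_R}\overline R \subseteq R$, every integer $\geq c_R + a_1 - 1$ lies in $v(\mathfrak m^2)$ (indeed $t^N = t^{c_R}\cdot t^{N-c_R} \in \mathfrak C_R \cdot \mathfrak m \subseteq \mathfrak m^2$ for $N$ large), so $S$ is bounded above. Write $S = \{a_1 < a_2 < \cdots < a_n\}$ for the resulting strictly increasing enumeration. The key reduction is the elementary fact that for a graded-type filtration argument, $\dim_k \mathfrak m/\mathfrak m^2 = \#\big(v(\mathfrak m)\setminus v(\mathfrak m^2)\big)$: choosing for each $a \in S$ an element of $R$ of valuation exactly $a$, one shows these elements project to a $k$-basis of $\mathfrak m/\mathfrak m^2$. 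The ``spanning'' direction uses that any $y \in \mathfrak m$ can be written, after subtracting a $k$-multiple of one of the chosen generators matching its leading term, as an element of strictly larger valuation plus something in $\mathfrak m^2$, and one iterates (the process terminates modulo $\mathfrak m^2$ because valuations large enough land in $v(\mathfrak m^2)$, using completeness to sum the tail); linear independence uses that a nontrivial relation among the images would produce an element of $\mathfrak m^2$ with valuation in $S$, a contradiction. This gives property (1): $n = \edim(R)$.

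For property (2), suppose $x_1, \dots, x_n \in R$ with $v(x_i) = a_i$. By the basis statement just proved, the images $\bar x_1, \dots, \bar x_n$ span $\mathfrak m/\mathfrak m^2$ (they are $n$ elements whose valuations are exactly the distinct elements of $S$, hence $k$-linearly independent modulo $\mathfrak m^2$ by the same leading-term argument, hence a basis). By the complete version of Nakayama's lemma, $x_1, \dots, x_n$ then generate $\mathfrak m$ as an ideal, and since $R$ is complete local with residue field $k$, this yields $R = k[[x_1, \dots, x_n]]$ (the surjection $k[[X_1,\dots,X_n]] \to R$ sending $X_i \mapsto x_i$ is onto because its image is a complete subring containing $k$ and $\mathfrak m$, hence all of $R$).

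It remains to prove uniqueness: if $b_1 < \cdots < b_m$ is any other sequence satisfying (1) and (2), then $\{b_1,\dots,b_m\} = S$. Here $m = n$ by (1). The plan is to argue by contradiction using a minimal-counterexample/exchange argument on the symmetric difference. If the two sets differ, pick the smallest integer $b$ lying in one but not the other. Suppose first $b \in S \setminus \{b_1,\dots,b_n\}$, say $b = a_j$. Choose elements $x_1,\dots,x_n \in R$ with $v(x_i) = b_i$; by hypothesis (2) these generate $R$, so every element of $R$ — in particular an element of valuation $a_j = b$ — is a power series in the $x_i$. Analyzing the possible valuations of monomials in the $x_i$ shows that the smallest valuation in $v(\mathfrak m)$ not achievable ``purely'' (i.e. not already forced into $v(\mathfrak m^2)$) must coincide with the corresponding element of $S$; more precisely, one shows inductively that the two increasing sequences must agree term by term, since at the first discrepancy the set not containing the smaller value $b$ cannot generate an element of valuation $b$ without using a product of two generators, which would place $b \in v(\mathfrak m^2)$ and hence $b \notin S$ — contradicting $b \in S$ — while symmetrically, if $b \in \{b_1,\dots,b_n\}\setminus S$ then $b \in v(\mathfrak m^2)$, so $x$ with $v(x)=b$ is congruent modulo $\mathfrak m^3$ to a polynomial in the remaining $x_i$'s, forcing that $x$ is redundant in a minimal generating set, contradicting that $x_1,\dots,x_n$ minimally generate $\mathfrak m$ (which follows from (1)+(2)).

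I expect the uniqueness argument to be the main obstacle: the existence and generation statements are essentially the standard translation between $\mathfrak m/\mathfrak m^2$ and leading terms of valuations, but pinning down that \emph{any} sequence with properties (1) and (2) must be exactly $v(\mathfrak m)\setminus v(\mathfrak m^2)$ requires carefully controlling which valuations can arise from products of generators versus which are ``primitive.'' The cleanest route is probably to first prove, as a lemma, that for any minimal generating set $x_1,\dots,x_n$ of $\mathfrak m$ one has $\{v(x_1),\dots,v(x_n)\} \supseteq v(\mathfrak m)\setminus v(\mathfrak m^2)$ with equality forced by the cardinality count $n = \edim(R) = \#(v(\mathfrak m)\setminus v(\mathfrak m^2))$ from property (1); then any valid sequence is a minimal generating set's valuation profile and the set equality is automatic.
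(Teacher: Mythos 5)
The first half of your proposal --- finiteness of $S = v(\mathfrak m)\setminus v(\mathfrak m^2)$, the identification $\#S = \dim_k(\mathfrak m/\mathfrak m^2)$ via the leading-term spanning/independence argument, and the deduction of property (2) by Nakayama --- is essentially the paper's own route (the paper isolates this as a lemma about an arbitrary fractional ideal $I$ and the set $v(I)\setminus v(\mathfrak m I)$), and it is fine.

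The uniqueness half has a genuine gap. Your second case asserts that if $b=b_i\in v(\mathfrak m^2)$ then an element $x$ with $v(x)=b$ is automatically redundant in a minimal generating set; this is false. In the paper's Example~\ref{ex:rep1}, $R=\CC[[t^6,t^9+t^{10},2t^{19}+t^{20}+t^{41}]]$ has $19\in v(\mathfrak m^2)$ (because $x_2^2-x_1^3=2t^{19}+t^{20}$), yet the element $2t^{19}+t^{20}+t^{41}$, of valuation $19$, is a genuine minimal generator of $\mathfrak m$. The same example refutes your proposed ``cleanest route'' lemma: $y_1=t^6$, $y_2=t^9+t^{10}$, $y_3=2t^{19}+t^{20}+t^{41}$ minimally generate $\mathfrak m$ with distinct valuations $\{6,9,19\}$, which does \emph{not} contain $v(\mathfrak m)\setminus v(\mathfrak m^2)=\{6,9,41\}$; so the valuation profile of a minimal generating set need not contain $S$, and the containment-plus-cardinality argument collapses. (Without the distinct-valuations hypothesis it fails even more cheaply: $k[[t^2,t^3]]=k[[t^2,\,t^2+t^3]]$.) What actually makes uniqueness work --- and what your argument never exploits --- is that property (2) quantifies over \emph{all} choices of elements with the prescribed valuations. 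If $\{b_1,\dots,b_n\}\neq S$, then since both sets have $n$ elements and each $b_i$ must lie in $v(\mathfrak m)$ for (2) to have content, some $b_i$ lies in $v(\mathfrak m^2)$; one then deliberately \emph{chooses} $x_i\in\mathfrak m^2$ with $v(x_i)=b_i$, and the resulting $x_1,\dots,x_n$ would have to minimally generate $\mathfrak m$ (they generate $R$ and $n=\mathrm{edim}(R)$) while one of them lies in $\mathfrak m^2$ --- a contradiction by Nakayama. Your first case (the smallest discrepancy lying in $S$) can be repaired with some care about cancellation between the $k$-linear part and the $\mathfrak m^2$-part of a decomposition, but it is not needed once the choice argument above is in place.
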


Crucial elements needed to prove this result are already 
contained in the treatise~\cite{HK} by Herzog and Kunz. 
One of their observations may be used to deduce the following.

\begin{lemma}\label{lem:frac_ideal_HK}
Let $I \subseteq K$ be a fractional ideal of $R$. Then the 
set 
	$$ S = v(I) \setminus v(\mathfrak m \cdot I) $$ 
is finite, say $S = \{b_1, \ldots, b_t\}$, and if 
$x_1, \ldots , x_t \in I$ are elements with $v(x_i)=b_i$, 
then $\{x_1, \ldots, x_t\}$ is a minimal set of generators 
of $I$.
\end{lemma}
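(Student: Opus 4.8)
The engine of the proof is a colength formula for fractional ideals, which is the form of the Herzog--Kunz observation suited to the present setting: \emph{if $N' \subseteq N$ are finitely generated $R$-submodules of $K$ with $\dim_k(N/N') < \infty$, then $\dim_k(N/N') = \#\bigl(v(N)\setminus v(N')\bigr)$.} Granting this and applying it with $N = I$ and $N' = \mathfrak m I$ (here $\dim_k(I/\mathfrak m I) < \infty$ because $I$ is finitely generated over the Noetherian local ring $R$), we get immediately that $S = v(I)\setminus v(\mathfrak m I)$ is finite and that $|S| = t$ equals $\dim_k(I/\mathfrak m I)$, i.e.\ the minimal number of generators of $I$ by Nakayama's lemma. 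It then remains only to check that $t$ elements $x_1,\dots,x_t$ with $v(x_i)=b_i$ do generate $I$; a generating set of cardinality $\dim_k(I/\mathfrak m I)$ is automatically minimal.

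To prove the colength formula I would first isolate a rigidity statement: if $N' \subseteq N$ are finitely generated $R$-submodules of $K$ with $v(N') = v(N)$, then $N' = N$. Given $y \in N$, choose $z \in N'$ with $v(z)=v(y)$ and the unit $c \in k$ matching leading $t$-adic coefficients, so that $y - cz \in N$ has strictly larger valuation, which again lies in $v(N)=v(N')$; iterating produces elements of $N'$ of strictly increasing valuation, and summing the resulting series gives $y$, so $y \in N'$. In particular $v(N') \subsetneq v(N)$ whenever $N' \subsetneq N$. The convergence of that series is the step I expect to demand the most care: one needs that valuation tending to $\infty$ coincides with convergence to $0$ in the $\mathfrak m$-adic topology on $N'$, and that $N'$ is $\mathfrak m$-adically closed in $N$; both follow from finiteness of $\overline R$ over $R$ (giving $\overline{\mathfrak m}^{\,c}\subseteq \mathfrak m\overline R$ for some $c$) together with completeness of $R$, but they should be spelled out.

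With rigidity available, refine $\mathfrak m I \subseteq I$ to a chain $I = I_0 \supsetneq I_1 \supsetneq \cdots \supsetneq I_\ell = \mathfrak m I$ with $\dim_k(I_j/I_{j+1}) = 1$, where $\ell = \dim_k(I/\mathfrak m I)$. Then $v(I_{j+1})\subsetneq v(I_j)$, and $v(I_j)\setminus v(I_{j+1})$ consists of exactly one value $\beta_j$: were there two distinct values $\beta<\beta'$ in it, elements $y,y'\in I_j$ realizing them would have $k$-linearly independent classes in the one-dimensional space $I_j/I_{j+1}$, since every nonzero $k$-combination of $y$ and $y'$ has valuation $\beta$ or $\beta'$ and hence lies outside $I_{j+1}$. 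As $v(I_{j+1}) = v(I_j)\setminus\{\beta_j\}$ at each step, the $\beta_j$ are pairwise distinct and $v(I)\setminus v(\mathfrak m I) = \{\beta_0,\dots,\beta_{\ell-1}\}$, which has exactly $\ell$ elements; this is the colength formula.

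For the final clause, let $x_i\in I$ with $v(x_i)=b_i$ (the $b_i$ pairwise distinct). If $\sum_{i=1}^t c_i x_i \in \mathfrak m I$ for some $c_i\in k$ not all zero, take $i_0$ least with $c_{i_0}\neq 0$; since the $b_i$ are distinct and $c_{i_0}$ is a unit, $v\bigl(\sum_i c_i x_i\bigr)=b_{i_0}$, so $b_{i_0}\in v(\mathfrak m I)$, contradicting $b_{i_0}\in S$. Hence the classes of $x_1,\dots,x_t$ in $I/\mathfrak m I$ are $k$-linearly independent; being $t=\dim_k(I/\mathfrak m I)$ in number they form a $k$-basis, so by Nakayama $\{x_1,\dots,x_t\}$ generates $I$, and as noted this generating set is then minimal.
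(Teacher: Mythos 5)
Your proof is correct, but it is organized quite differently from the paper's. The paper gets finiteness of $S$ directly from the conductor bound ([HK, 2.5]: anything of valuation at least $\mathrm{min}_{\mathfrak m I} + c_R$ already lies in $\mathfrak m I$), proves that the $x_i$ generate by a direct descent (take an $f \in I$ of \emph{maximal} valuation among those not in $(x_1,\dots,x_t) + \mathfrak m I$, subtract a leading-coefficient-matched multiple of an $x_{i_0}$ or of an element of $\mathfrak m I$ to raise the valuation, contradiction), and for minimality proves linear independence modulo $\mathfrak m I$ exactly as you do, citing [HK, 2.9] for the count $\vert S\vert = \dim_k(I/\mathfrak m I)$. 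You instead reprove that Herzog--Kunz length formula from scratch — via a rigidity lemma ($v(N')=v(N)$ forces $N'=N$) and a composition series between $\mathfrak m I$ and $I$, with a clean one-new-value-per-step argument — and then deduce finiteness, the cardinality, and generation all by counting. The underlying mechanism (match leading coefficients to strictly increase valuation) is the same in both arguments, but you run it as an infinite summation requiring completeness of $N'$ and closedness of $N'$ in $N$, whereas the paper terminates it in finitely many steps using the conductor; your own convergence worries would in fact dissolve if you observed that once the remainder has valuation at least $\mathrm{min}_{N'}+c_R$ it already lies in $N'$, since $N'$ contains $x\,\mathfrak C_R = t^{\mathrm{min}_{N'}+c_R}\overline R$ for any $x\in N'$ of minimal valuation. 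The trade-off: the paper's route is shorter and reaches the generation statement directly; yours is self-contained (it proves rather than cites [HK, 2.9]) and yields the more general colength formula as a usable byproduct, at the cost of the topological care you rightly flag in the rigidity step.
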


\begin{proof}
For a fractional ideal $J \subseteq K$ we set 
(following~\cite{HK}, \S 2)
	$$ \mathrm{min}_J = \mathrm{min}\{ v(x) \, \vert \,\, x 
    \in J, x \neq 0 \} $$
Then by~\cite{HK}, 2.5. any $x \in \overline{R}$ with 
$v(x) \geq \mathrm{min}_{\mathfrak m \cdot I} + c_R$ 
satisfies $x \in \mathfrak m \cdot I$, implying the $S$ 
is finite. 

We write $S = \{b_1, \ldots, b_t\}$ with $b_1 < \cdots < b_n$ and choose any elments 
$x_1, \ldots , x_t \in I$ with $v(x_i) = b_i$. To show that 
$x_1, \ldots, x_t$ generate $I$,  it suffices to show, by 
Nakayama's lemma, that 
	$$ I = (x_1, \ldots, x_n) + \mathfrak m \cdot I $$
Assume to the contrary that there are $f \in I$ with $f 
\notin (x_1, \ldots, x_n) + \mathfrak m \cdot I $. Any such 
$f$ satisfies 
	$$ v(f) \leq \mathrm{max} \{0, \mathrm{min}_{\mathfrak m 
    \cdot I} + c_R - 1 \} $$ 
as otherwise $f \in \mathfrak m \cdot I$ by~\cite{HK}, 2.5. 
Thus 
	$$ a = \mathrm{max} \{ v(f) \, \vert \,\,  f \in I, f 
    \notin (x_1, \ldots, x_t) + \mathfrak m \cdot I  \} $$
is well defined. Choose $f \in I$, $f \notin  (x_1, \ldots, 
x_t) + \mathfrak m \cdot I$ with $v(f) = a$ and write 
$f = t^a \cdot g$ for some 
	\begin{equation}\label{eq:elem_not_in_1} 
	g = \rho_0 + \rho_1 \cdot t + \rho_2 \cdot t^2 + \cdots 
    \quad \in \overline{R} 
	\end{equation} 
with $\rho_j \in k$ and  $\rho_0 \neq 0$. 

If $a \in \{b_1, \ldots, b_t\}$, say $a = b_{i_0}$, then set 
$r = x_{i_0}$, if $a \notin \{b_1, \ldots, b_t\}$, then there 
exists an $r \in \mathfrak m \cdot I$ with 
$v(r) = a$. In both cases we may write 
	$$ r = t^a \cdot \left(\varepsilon_0 + \varepsilon_1 
    \cdot t + \varepsilon_2 \cdot t^2 + \cdots \right)  $$
with $\varepsilon_j \in k$ and  $\varepsilon_0 \neq 0$, and 
we set
	$$ h := \frac {\rho_0}{\varepsilon_0} \cdot r \, \in 
	(x_1, \ldots, x_t) + \mathfrak m \cdot I $$ 
to obtain $f-h \in I$, but $f -h \notin (x_1, \ldots, x_t) 
+ \mathfrak m \cdot I$ with $ v(f-h) > a$ 
by~\eqref{eq:elem_not_in_1}, contradicting the choice of $a$. 

Thus $I$ is generated by $x_1, \ldots, x_t$. To show that 
this set minimally generates $I$ it suffices to show that 
they are linearly independent modulo $\mathfrak m \cdot I$, 
and this follows from~\cite{HK} 2.9, implying that 
$\vert S \vert = \mathrm{dim}_k(I/\mathfrak m \cdot I)$ in 
our situation. 

For the convenience of the reader we include a short 
argument for this special case: 

Assume $x_1, \ldots , x_t$ are not linearly independent 
modulo $\mathfrak m \cdot I$ and denote by $\overline{x_i}$ 
the residue class of $x_i$ modulo $\mathfrak m \cdot I$. 
Then there exist $\varepsilon_1, \ldots, \varepsilon_t 
\in k$, not all $0$, with 
	\begin{equation}\label{eq:elem_not_lin_in_1} 
 	\varepsilon_1 \cdot \overline{x_1} + \cdots + 
    \varepsilon_t \cdot \overline{x_t} = 0 
	\end{equation}
Let $i_0 = \mathrm{min} \{ i \, \vert \,\, \varepsilon_i 
\neq 0 \}$. Then, as $k \subseteq R$, 
relation~\eqref{eq:elem_not_lin_in_1} gives a relation 
	\begin{equation}\label{eq:elem_not_lin_in_2} 
 	\varepsilon_{i_0} \cdot x_{i_0} + \cdots + \varepsilon_t 
    \cdot x_t = f
	\end{equation}
in $R$ for some $f \in \mathfrak m \cdot I$. But then 
	$$ b_{i_0} = v(x_{i_0}) = v(\varepsilon_{i_0} \cdot 
    x_{i_0} + \cdots + \varepsilon_n \cdot x_n) = v(f) 
    \in v(\mathfrak m \cdot I) $$
contradicting the choice of the $b_i$. 
\end{proof}

\begin{notiz}
Note that the set $S = \{b_1, \ldots, b_t\}$ constructed 
in~\ref{lem:frac_ideal_HK} is a subset of a minimal set 
$\Sigma = \{v_1, \ldots, v_T\}$ of generators of the value  
ideal $v(J)$ associated to $J$ in~\cite[\S 2]{HK}. In 
general however $S \subsetneq \Sigma$, thus 
Lemma~\ref{lem:frac_ideal_HK} strengthens~\cite[2.6]{HK}. 
\end{notiz}

\medbreak

\noindent\textit{Proof of Theorem~\ref{thm:exist_unique}.} 
From lemma~\ref{lem:frac_ideal_HK} we conclude that 
	$$ \{a_1, \ldots, a_n\} = v(\mathfrak m) \setminus 
    v(\mathfrak m^2) $$
is a finite set and that any $x_1, \ldots, x_n \in R$ with 
$v(x_i) = a_i$ (are contained in $\mathfrak m$ and) 
minimally generate $\mathfrak m$, hence 
	$$ R = k[[x_1, \ldots, x_n ]] $$ 
and $n = \mathrm{edim}(R)$. 

If $b_1 < \cdots < b_N$ is any other set of integers 
satisfying (1.) and (2.) of Theorem~\ref{thm:exist_unique}, 
then clearly $N = n \,\, (= \mathrm{edim}(R))$. 
If $\{b_1, \ldots , b_n\} \neq \{a_1, \ldots, a_n\}$ then 
for some $i$ necessarily $b_i \in v(\mathfrak m^2)$, hence 
we could choose $x_i \in \mathfrak m^2$ with 
$v(x_i) = b_i$, contradicting the fact that $x_1,\ldots,x_n$ 
minimally generate $\mathfrak m$ and completing the proof of 
Theorem~\ref{thm:exist_unique}. 

\medbreak

\begin{definition}
The sequence $a_1 < a_2 < \cdots < a_n$ is called 
\textbf{Herzog--Kunz sequence} of $R$, and elements 
$x_1, \ldots, x_n \in R$ with $v(x_i) = a_i$ are called 
\textbf{Herzog--Kunz generators} of $R$. 
\end{definition}

\begin{Corollary}\label{cor:mon_gen}
If for some $i_0 \in \{1, \ldots, n\}$, $a_{i_0} \geq c_R$, 
then we can choose $x_1, \ldots , x_n$ in such a way that 
$x_i = t^{a_i}$ for $i \geq i_0$. 

If $k$ is algebraically closed with $\mathrm{char}(k) = 0$, 
then the parameter $t$ of $\overline{R}$ can be chosen in 
such a way that in addition $x_1 = t^{a_1}$ is possible. 
\end{Corollary}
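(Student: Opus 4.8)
The plan is to exploit two facts established above: the Herzog--Kunz generators can be *any* elements of $R$ with the prescribed valuations $a_i$ (by Theorem~\ref{thm:exist_unique}), and the conductor ideal satisfies $\mathfrak C_R = t^{c_R}\overline R \subseteq R$, so that $t^m \in R$ for every $m \geq c_R$. For the first assertion, suppose $a_{i_0} \geq c_R$. Since $1 \leq a_1 < a_2 < \cdots < a_n$ is strictly increasing, we have $a_i \geq a_{i_0} \geq c_R$ for all $i \geq i_0$, hence $t^{a_i} \in \mathfrak C_R \subseteq R$ and $v(t^{a_i}) = a_i$. For $i < i_0$ simply pick any $x_i \in R$ with $v(x_i) = a_i$ (for instance the given generators from~\eqref{eq:rep_general}, after reindexing; such elements exist because $a_i \in v(\mathfrak m)$). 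Setting $x_i = t^{a_i}$ for $i \geq i_0$ and leaving the rest arbitrary, Theorem~\ref{thm:exist_unique}(2) gives $R = k[[x_1,\ldots,x_n]]$, which is exactly the claim.

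For the second assertion, assume $k$ is algebraically closed of characteristic $0$; we want to additionally arrange $x_1 = t^{a_1}$, i.e.\ to adjust the parameter $t$ so that $R$ contains a minimal generator of valuation $a_1 = \mathrm{mult}(R)$ which is a pure power of the (new) parameter. Start from any $y \in \mathfrak m$ with $v(y) = a_1$; write $y = \alpha t^{a_1}$ with $\alpha = c_0 + c_1 t + \cdots \in \overline R$ a unit, $c_0 \neq 0$. The idea is to find a new local parameter $s \in \overline R$ with $\overline R = k[[s]]$ such that $y = s^{a_1}$, i.e.\ such that $s = \alpha^{1/a_1} t$. Since $k$ is algebraically closed of characteristic zero, $a_1$ is invertible in $k$ and the formal power series $\alpha$ has an $a_1$-th root $\beta = \alpha^{1/a_1} \in \overline R$ (constant term $c_0^{1/a_1} \neq 0$, obtained by the formal binomial series / Hensel's lemma), so $\beta$ is again a unit and $s := \beta t$ satisfies $v(s) = 1$, $\overline R = k[[s]]$, and $s^{a_1} = \beta^{a_1} t^{a_1} = \alpha t^{a_1} = y \in R$. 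Now rerun the first part of the argument with the parameter $s$ in place of $t$: the conductor degree $c_R$ and the Herzog--Kunz sequence are intrinsic to $R$ (they depend only on $V(R)$ and on $R \subseteq \overline R$, not on the choice of uniformizer), so for $i \geq i_0$ we may take $x_i = s^{a_i} \in \mathfrak C_R \subseteq R$, for $2 \leq i < i_0$ take arbitrary $x_i \in R$ with $v(x_i) = a_i$, and set $x_1 = s^{a_1} = y$. Theorem~\ref{thm:exist_unique} again yields $R = k[[x_1,\ldots,x_n]]$ with $x_1 = s^{a_1}$, as desired.

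The only genuinely delicate point is the second assertion's use of an $a_1$-th root in $\overline R$: this is exactly where both hypotheses on $k$ are used — $\mathrm{char}(k) = 0$ (or at least $\mathrm{char}(k) \nmid a_1$) to invert $a_1$ and form the binomial expansion $(c_0 + c_1 t + \cdots)^{1/a_1}$, and $k$ algebraically closed to extract $c_0^{1/a_1} \in k$ for the constant term. Everything else is a direct application of Theorem~\ref{thm:exist_unique} together with the containment $t^m \in R$ for $m \geq c_R$, so there is no real obstacle beyond bookkeeping; one should just be careful to note that replacing $t$ by $s$ does not change $c_R$ or the $a_i$, which is immediate since these are defined through $R$ and its normalization rather than through a particular parametrization.
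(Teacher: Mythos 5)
Your proof is correct and follows essentially the same route as the paper: for $i \geq i_0$ one has $a_i \geq c_R$ so $t^{a_i} \in \mathfrak C_R \subseteq R$, and Theorem~\ref{thm:exist_unique} then certifies these as Herzog--Kunz generators; for the second claim you extract an $a_1$-th root of the unit via Hensel's lemma to renormalize the parameter, exactly as the paper does (it simply cites Hensel's lemma where you spell out the binomial-series construction and the invariance of $v$, $c_R$, and the $a_i$ under the change of uniformizer).
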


\begin{proof}
If $k$ is algebraically closed with $\mathrm{char}(k) = 0$, 
then by Hensel's Lemma (\cite[Theorem 7.3]{Ei}, see also 
\cite{HMM}, proof of Theorem 3.1) we may change the parameter 
$t$ of $\overline{R}$ in such a way that  $x_1 = t^{a_1}$. 
This doesn't change the valuation $v$, hence neither 
$a_1, \ldots, a_n$ nor $c_R$. 

For any $i \geq c_R$ we have that $t^i \in \mathfrak C_R 
\subseteq R$, hence for $i \geq i_0$ we may choose 
$x_i = t^{a_i}$. 
\end{proof}

The $a_i$ can also be found inductively. For this note first 
that necessarily 
	$$ a_1 =\mathrm{min}\{v(r)\,\vert \, r \in \mathfrak m 
    \setminus \{0\} \} = \mathrm{mult}(R) $$ 
is the multiplicity of $R$. Furthermore set 
	$$ T_i = k[[x_1,\ldots, x_i]] $$
($i = 1, \ldots, n-1$). 

\begin{Corollary}\label{cor:min_ai+1}
It holds
	$$ a_{i+1} = \mathrm{min} \{ w \,\vert \,\, w \in V(R) 
    \setminus V(T_i) \} $$
and for any $l \in \N$, if $i_0 = \mathrm{max} \{ i \, \vert 
\,\, a_i \leq l \}$, then any $f \in R$ may be written as 
	\begin{equation}\label{eq:split_val} 
	f = p + g  
	\end{equation}
with $p \in k[x_1, \ldots, x_{i_0}]$ and $g \in R$ with 
$v(g) \geq l+1$.
\end{Corollary}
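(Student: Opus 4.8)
The plan is to prove the two assertions in order, using Theorem~\ref{thm:exist_unique} and the inductive structure of the Herzog–Kunz generators. For the first claim, I would argue that $a_{i+1} = \min\{w \mid w \in V(R) \setminus V(T_i)\}$ by a two-sided inequality. Since $T_i = k[[x_1, \ldots, x_i]] \subseteq R$ is generated by elements of valuations $a_1, \ldots, a_i$, every value in $V(T_i)$ is a nonnegative combination of the $a_1, \ldots, a_i$ together with contributions from higher-order terms of the $\alpha_j$; the key point is that $V(T_i)$ is a numerical semigroup containing $a_1, \ldots, a_i$ and contained in $V(R)$. To see that $a_{i+1} \notin V(T_i)$: if it were, we could pick $y \in T_i$ with $v(y) = a_{i+1}$, and then $y$ lies in the $R$-submodule generated by products of $x_1, \ldots, x_i$, hence in $\mathfrak{m}^2$ whenever $y$ is not a unit multiple of a single $x_j$ — but $a_{i+1} \notin \{a_1, \ldots, a_i\}$ by strict monotonicity, so $y \in \mathfrak{m}^2$, contradicting $a_{i+1} \in v(\mathfrak{m}) \setminus v(\mathfrak{m}^2)$. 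Conversely, for any $w \in V(R)$ with $w < a_{i+1}$, I must show $w \in V(T_i)$; the cleanest route is to observe $w \notin v(\mathfrak{m})\setminus v(\mathfrak{m}^2)$ beyond $\{a_1,\dots,a_i\}$ (by minimality of the $a_j$ in increasing order, the only Herzog–Kunz values below $a_{i+1}$ are $a_1, \ldots, a_i$), so any element of $R$ of value $w$ decomposes modulo higher valuation into a polynomial in $x_1, \ldots, x_i$ plus an element of strictly larger valuation, and then one iterates. This iteration is exactly the content of the second assertion, so I would prove the two together, or prove~\eqref{eq:split_val} first and deduce the formula for $a_{i+1}$ as a corollary.

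For the decomposition~\eqref{eq:split_val}, given $f \in R$ and $l \in \mathbb{N}$ with $i_0 = \max\{i \mid a_i \leq l\}$, the idea is a descending induction on $v(f)$ starting from the observation that if $v(f) \geq l+1$ we take $p = 0$. Suppose $v(f) = m \leq l$. Since $m \leq l$ and the only Herzog–Kunz values $\leq l$ are $a_1, \ldots, a_{i_0}$, either $m \in V(T_{i_0})$ or $m$ lies in the finite "gap" set; but in fact I claim $m \in V(T_{i_0})$ always — indeed if $m \notin V(T_{i_0})$ then by the first part $m \geq a_{i_0+1} > l \geq m$, a contradiction. Hence there is $q \in k[x_1, \ldots, x_{i_0}]$ (a polynomial, since $T_{i_0}$ is complete but the relevant approximation to order $l$ only needs finitely many terms) with $v(f - q) > m$. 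Replacing $f$ by $f - q$, the valuation strictly increases; after finitely many steps — bounded because once the valuation exceeds $l$ we stop, and the relevant fractional-ideal argument of Lemma~\ref{lem:frac_ideal_HK} guarantees no infinite ascent is needed below level $l+1$ — we accumulate $p \in k[x_1, \ldots, x_{i_0}]$ with $v(f - p) \geq l+1$; set $g = f - p \in R$.

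The main obstacle I anticipate is the circularity between the two assertions: the decomposition~\eqref{eq:split_val} is most naturally proved using the formula $a_{i+1} = \min(V(R) \setminus V(T_i))$, while that formula's "$\leq$" direction is most naturally proved using a decomposition result. The way to break the loop is to prove a slightly weaker statement first — namely that every $w \in V(R)$ with $w < a_{i+1}$ already lies in $V(R)$ expressible using only $x_1, \ldots, x_i$ up to higher-order correction — by induction on $w$ using only Theorem~\ref{thm:exist_unique} and the fact that the set $v(\mathfrak{m}) \setminus v(\mathfrak{m}^2)$ of Herzog–Kunz values is fixed and increasingly ordered. A secondary technical point is ensuring the accumulated $p$ is a genuine polynomial rather than a power series: this is fine because we only ever need to kill finitely many coefficients (those in degrees $\leq l$), so each correction $q$ can be taken to be a polynomial and the process terminates after finitely many steps, leaving $p \in k[x_1, \ldots, x_{i_0}]$ as required.
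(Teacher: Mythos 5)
Your plan for showing $a_{i+1}\notin V(T_i)$ (via $a_{i+1}\in v(\mathfrak m^2)$ if it were in $V(T_i)$) and your treatment of the decomposition \eqref{eq:split_val} by successive valuation-increasing corrections both match the paper. The genuine problem is the other half of the first claim, namely that every $w\in V(R)$ with $w<a_{i+1}$ already lies in $V(T_i)$. You correctly sense that a circularity with \eqref{eq:split_val} threatens here, but your proposed escape --- that ``any element of $R$ of value $w$ decomposes modulo higher valuation into a polynomial in $x_1,\dots,x_i$ plus an element of strictly larger valuation,'' to be established by induction on $w$ --- is not carried out and does not obviously close. Knowing $w\in v(\mathfrak m^2)$ gives you some $h\in\mathfrak m^2$ with $v(h)=w$, but $h$ is a sum $\sum r_{jk}x_jx_k$ whose individual summands may have valuations strictly smaller than $w$ that cancel; replacing each $r_{jk}$ by an element of $T_i$ with the same valuation and leading coefficient does not preserve that cancellation, so you cannot conclude that $T_i$ contains an element of valuation exactly $w$. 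Repairing this by induction would require the full decomposition \eqref{eq:split_val} for the factors $r_{jk}$, which is what you are trying to prove --- the loop is not broken.

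The paper avoids all of this with one elementary observation that your proposal never uses: since $R=k[[x_1,\dots,x_n]]$, one has $R=T_i+(x_{i+1},\dots,x_n)R$, and every element of the ideal $(x_{i+1},\dots,x_n)R$ has valuation at least $a_{i+1}$. Hence any $f\in R$ with $v(f)<a_{i+1}$ has the same valuation as its $T_i$-component, which gives $\min\bigl(V(R)\setminus V(T_i)\bigr)\geq a_{i+1}$ immediately and unconditionally. With that in hand, your observation that $a_{i+1}\in V(T_i)$ would force $a_{i+1}\in v(\mathfrak m_{T_i}^2)\subseteq v(\mathfrak m^2)$, contradicting Theorem~\ref{thm:exist_unique}, finishes the first claim, and your iteration then yields \eqref{eq:split_val} exactly as in the paper. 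You need to insert this decomposition (or an equivalent substitute) explicitly; without it the first claim, and hence also \eqref{eq:split_val}, is not proved.
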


\begin{proof}
Set 
	$$ b =  \mathrm{min} \{ w \,\vert \,\, w \in V(R) 
    \setminus V(T_i) \} $$
Clearly $a_{i+1} \leq b$ as every $f \in (x_{i+1}, \ldots, 
x_n)$ has valuation at least $a_{i+1}$. 

If $a_{i+1} < b$, then $a_{i+1} = v(f)$ for some 
$f \in \mathfrak m_{T_i}$, 
$f = \sum\limits_{j=1}^{i} r_j \cdot x_j$. As 
$a_{i+1} > a_i$, necessarily $r_j \in \mathfrak m_{T_i}$ 
for all $j$ (if some of the $r_j$ are units, we derive a 
contradiction as in the proof of the linear independence of 
$\overline{x_1}, \ldots, \overline{x_n}$ in 
$\mathfrak m/\mathfrak m^2$). But then 
	$$ a_{i+1} \in v(\mathfrak m_{T_i}^2) \subseteq 
    v(\mathfrak m^2) $$
a contradiction. 

For the second part we only need to consider $f \in R$ with 
$v(f) \leq l$. If we assume that not all such $f$ have a 
representation $f = p+g$ as desired, we may choose one of maximal 
valuation $b \, (\leq l)$. Then by the above, 
$b \in V(T_{i_0})$, hence there exists a $p \in T_{i_0}$ 
with $v(p) = b$. Clearly we may assume 
that $p \in k[x_1, \ldots, x_{i_0}]$. As in the proof of 
theorem~\ref{thm:exist_unique} 
we find an $\varepsilon \in k$ such that 
$v(f - \varepsilon \cdot p) > b$. But then 
$f - \varepsilon \cdot p$ would not have a representation as 
desired, contradicting the choice of $f$ and $b$. 
\end{proof}

\begin{notiz}
In equation~\eqref{eq:split_val} the $p \in k[x_1, \ldots, 
x_{i_0}]$ may be chosen to be a polynomial of degree at 
most $\lceil \frac {l}{a_1} \rceil$. 
\end{notiz}

\begin{Corollary}\label{cor:ai_value_sgr}
Assume the value semigroup $V(R)$ of $R$ is minimally 
generated by $w_1, \ldots, w_N$. 

Then 
	$$ a_1 = w_1, \quad a_2 = w_2 $$
and 
	\begin{equation}\label{eq:incl_ai_sgr} 
	\{a_1, \ldots, a_n\} \subseteq \{w_1, \ldots, w_N\} 
	\end{equation}
If $R$ is a numerical semigroup ring, then 
	$$ \{a_1, \ldots, a_n\} = \{w_1, \ldots, w_N\} $$
\end{Corollary}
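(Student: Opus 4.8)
The plan is to combine the characterization $\{a_1,\dots,a_n\} = v(\mathfrak m)\setminus v(\mathfrak m^2)$ from Theorem~\ref{thm:exist_unique} with the elementary fact that minimal generators of a numerical semigroup are exactly those elements $w$ that cannot be written as $w' + w''$ with $w', w''$ smaller nonzero elements of the semigroup. Since $v(\mathfrak m) = V(R)\setminus\{0\}$ and $v(\mathfrak m^2)$ consists of sums of two elements of $V(R)\setminus\{0\}$ together with possibly larger elements, I expect the inclusion~\eqref{eq:incl_ai_sgr} to follow almost immediately: if $a_i$ were not a minimal generator of $V(R)$, then either $a_i \notin V(R)$ (impossible, since $a_i \in v(\mathfrak m)$) or $a_i = w' + w''$ for nonzero $w', w'' \in V(R)$, whence $a_i \in v(\mathfrak m^2)$, contradicting $a_i \notin v(\mathfrak m^2)$. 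So every $a_i$ lies in $\{w_1,\dots,w_N\}$.

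For the statements $a_1 = w_1$ and $a_2 = w_2$: the equality $a_1 = w_1$ is already recorded, since $a_1 = \mathrm{mult}(R) = \min(V(R)\setminus\{0\}) = w_1$. For $a_2 = w_2$, I would argue that $w_2 \in v(\mathfrak m)$ and $w_2 \notin v(\mathfrak m^2)$. Indeed $w_2$ cannot be a sum of two nonzero elements of $V(R)$ (the smallest such sum is $2w_1 \geq w_1 + w_2 > w_2$ only if... — more carefully, any sum $w' + w''$ with $w', w'' \geq w_1$ satisfies $w' + w'' \geq 2w_1$; if $2w_1 > w_2$ we are done, and if $2w_1 \leq w_2$ then $2w_1 \in V(R)$ forces $2w_1$ to be a non-generator lying strictly between $w_1$ and $w_2$, which is absurd unless $2w_1 = w_2$, but $w_2$ is a minimal generator so $w_2 \neq 2w_1$; hence $2w_1 > w_2$ after all). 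Therefore $w_2 \notin v(\mathfrak m^2)$, so $w_2 \in v(\mathfrak m)\setminus v(\mathfrak m^2) = \{a_1,\dots,a_n\}$; since $w_2 > w_1 = a_1$ and $a_2$ is the second-smallest of the $a_i$, and since by~\eqref{eq:incl_ai_sgr} every $a_i$ with $a_i > a_1$ satisfies $a_i \geq w_2$, we get $a_2 = w_2$.

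For the numerical semigroup ring case: here $R = k[[t^{w_1},\dots,t^{w_N}]]$, so $\mathfrak m$ is generated by the monomials $t^{w_i}$ and $v(\mathfrak m^2)$ is precisely the set of sums $w_i + w_j$ plus larger semigroup elements, i.e.\ $v(\mathfrak m^2) = \{\,w \in V(R) : w = w' + w'',\ w', w'' \in V(R)\setminus\{0\}\,\}$. Consequently $v(\mathfrak m)\setminus v(\mathfrak m^2)$ is exactly the set of minimal generators $\{w_1,\dots,w_N\}$, giving $\{a_1,\dots,a_n\} = \{w_1,\dots,w_N\}$; in particular $n = N$. The main thing to be careful about — and the only place a little work is needed — is verifying that in the numerical semigroup ring $v(\mathfrak m^2)$ really is the set of decomposable semigroup elements and contains nothing extra, which is clear because a product of two elements of $\mathfrak m$, expanded in the $t$-adic expansion, has leading term a monomial $t^{w_i + w_j}$; conversely $t^{w_i+w_j} \in \mathfrak m^2$ directly. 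I do not anticipate any serious obstacle; the argument is essentially bookkeeping on numerical semigroups once Theorem~\ref{thm:exist_unique} is in hand.
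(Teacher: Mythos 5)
Your argument for the inclusion \eqref{eq:incl_ai_sgr}, for $a_1=w_1$, and for the numerical semigroup ring case is correct (the paper gives no proof of this corollary, so there is nothing to compare against there). The gap is in your proof that $a_2=w_2$. You reduce it to the claim $2w_1>w_2$, which you justify by asserting that a non-generator of $V(R)$ cannot lie strictly between $w_1$ and $w_2$. That assertion is false: for $R=k[[t^2,t^5]]$ one has $w_1=2$, $w_2=5$ and the non-generator $4=2w_1$ sits strictly between them; similarly for $k[[t^3,t^7]]$. So the case $2w_1<w_2$ is genuinely open in your argument, and in that case the bound $v(\mathfrak m^2)\subseteq[2w_1,\infty)$ no longer excludes $w_2$. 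A second, related caution: you cannot in general deduce $w\notin v(\mathfrak m^2)$ from $w$ being indecomposable in $V(R)$, because of cancellation --- the paper's Example~\ref{ex:rep1} has $19$ a minimal generator of $V(R)=\langle 6,9,19,41\rangle$ and yet $19=v(x_2^2-x_1^3)\in v(\mathfrak m^2)$. This is exactly why the inclusion \eqref{eq:incl_ai_sgr} can be strict, so any proof of $a_2=w_2$ must actually control cancellation rather than only count decompositions.

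The statement $a_2=w_2$ is nevertheless true, and there are two easy repairs. Either invoke Corollary~\ref{cor:min_ai+1} with $i=1$: since $V(T_1)=\langle w_1\rangle$ and every element of $V(R)$ smaller than $w_2$ is a multiple of $w_1$ (any semigroup element below $w_2$ can only involve the generator $w_1$), while $w_2\notin\langle w_1\rangle$, one gets $a_2=\min\bigl(V(R)\setminus V(T_1)\bigr)=w_2$; combined with \eqref{eq:incl_ai_sgr} and $a_1=w_1$ this is immediate. Or argue directly that $w_2\notin v(\mathfrak m^2)$: writing $\mathfrak m^2=x_1^2R+\mathfrak m\cdot(x_2,\ldots,x_n)$ and noting that the second summand has all valuations at least $a_1+a_2\geq w_1+w_2>w_2$, any $f\in\mathfrak m^2$ with $v(f)\leq w_2$ has $v(f)=2w_1+v(r)$ for some $r\in R$ with $v(r)<w_2$, hence $v(r)\in\langle w_1\rangle$ and $v(f)$ is a multiple of $w_1$; since $w_2$ is a minimal generator it is not such a multiple, so $w_2\notin v(\mathfrak m^2)$.
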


\begin{notiz}
Corollary~\ref{cor:ai_value_sgr} shows that 
Theorem~\ref{thm:exist_unique} 
strengthens~\cite[3.6]{HK}.
\end{notiz}

\begin{beispiel}\label{ex:rep1}
Let $R = \C[[t^8,t^{12}+t^{14}+t^{15}]]$ (cf. 
\cite[p.3]{Fr}). Then $a_1 = 8$, $a_2 = 12$ and we may 
choose $x_1 = t^8$ and $x_2 = t^{12}+t^{14} + t^{15}$. 
Here we have $V(R) = \langle 8,12,26,55\rangle$, so this 
example shows that in general the 
inclusion~\eqref{eq:incl_ai_sgr} is strict and 
	$$ \mathrm{gcd}\{a_1, \ldots, a_n\}  > 
    \mathrm{gcd}\{w_1, \ldots, w_N\} \quad (\, = 1 \,) $$

Let $R = \C[[t^{6}, t^{9}+t^{10}, 2 t^{19}+t^{20}+t^{41}]]$. 
Then we have $V(R) = \langle 6, 9, 19, 41 \rangle$, but 
$a_1 = 6$, $a_2 = 9$, $a_3 = 41$. Here we may choose 
$x_1 = t^6$ and $x_2 = t^{9}+t^{10}$ , but 
	$$ x_2^2- x_1^3 = 2 t^{19}+ t^{20} $$
so $19 \in v(\mathfrak m^2)$. A direct computation shows 
that $41 \notin V(T_2)$.  

This example shows that the elements $a_1, \ldots, a_n$ 
are not necessarily the first $n$ elements of a minimal 
set of generators of $V(R)$. 
\end{beispiel}

\section{Torsion of Differential Forms}

As above let $k$ be an algebraically closed field, let 
$(R, \mathfrak m)$ be a complete local domain of dimension 
$1$, containing $k$ with $R/\mathfrak {m} = k$, let 
$\overline{R} = k[[t]]$ be its normalization and $c_R$ its 
conductor degree. Furthermore let $a_1 < \cdots < a_n$ be 
the Herzog--Kunz sequence of $R$. 

\begin{proposition}\label{prop:msquare}
If $f \in R$ with  $v(f)  > a_n$, then $f \in \mathfrak m^2$.
\end{proposition}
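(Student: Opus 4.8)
The plan is to prove the contrapositive: assuming $f \in R$ with $f \notin \mathfrak m^2$, I will show that $v(f) \le a_n$. (If $v(f) > a_n \ge a_1 \ge 1$, then $f$ is a nonunit and hence lies in $\mathfrak m$; the case $f \notin \mathfrak m$ is trivial since then $v(f) = 0$, so there is no loss in assuming $f \in \mathfrak m \setminus \mathfrak m^2$.) Fix Herzog--Kunz generators $x_1, \dots, x_n$ of $R$. By Theorem~\ref{thm:exist_unique} these minimally generate $\mathfrak m$, so their residues form a $k$-basis of $\mathfrak m / \mathfrak m^2$; since the coefficient field $k$ lies in $R$, I can write $f = c_1 x_1 + \cdots + c_n x_n + m$ with $c_i \in k$ and $m \in \mathfrak m^2$, and the hypothesis $f \notin \mathfrak m^2$ forces not all $c_i$ to vanish.

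Next I would read off the valuation of the ``linear part''. Let $i_0 = \min\{\, i : c_i \neq 0 \,\}$. Since a nonzero element of $k$ has $v$-value $0$ and $v(x_i) = a_i$, the term $c_{i_0} x_{i_0}$ has valuation exactly $a_{i_0}$, while each later term $c_i x_i$ with $i > i_0$ has valuation $\ge a_i > a_{i_0}$. Hence $v(c_1 x_1 + \cdots + c_n x_n) = a_{i_0}$.

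The crucial step is to compare this with $v(m)$ via the defining identity $\{a_1, \dots, a_n\} = v(\mathfrak m) \setminus v(\mathfrak m^2)$ supplied by Theorem~\ref{thm:exist_unique}. If $m \neq 0$, then $v(m) \in v(\mathfrak m^2)$, so $v(m) \notin \{a_1, \dots, a_n\}$ and in particular $v(m) \neq a_{i_0}$. Thus the summands $c_1 x_1 + \cdots + c_n x_n$ and $m$ have distinct valuations, whence $v(f) = \min\{ a_{i_0}, v(m) \} \le a_{i_0} \le a_n$; and if $m = 0$ then directly $v(f) = a_{i_0} \le a_n$. Either way $v(f) \le a_n$, which is the contrapositive of the assertion.

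I do not expect a genuine obstacle: this is a one-step ``subtract the leading term'' argument. The two points requiring a little care are that one actually has an expansion $f = \sum c_i x_i + m$ with constant coefficients $c_i \in k$ — this is exactly where Theorem~\ref{thm:exist_unique} (that the Herzog--Kunz generators minimally generate $\mathfrak m$) is invoked — and the harmless degenerate cases $m = 0$ and $v(m) < a_{i_0}$, neither of which disturbs the conclusion $v(f) \le a_n$.
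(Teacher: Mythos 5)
Your proof is correct, and it takes a genuinely different route from the paper's. The paper argues directly: since $v(f)>a_n$ and $\{a_1,\dots,a_n\}=v(\mathfrak m)\setminus v(\mathfrak m^2)$, one has $v(f)\in v(\mathfrak m^2)$, and then (exactly as in the proof of Lemma~\ref{lem:frac_ideal_HK}) one iteratively subtracts elements of $\mathfrak m^2$ with matching leading coefficient to raise the valuation of the remainder above $\mathrm{min}_{\mathfrak m^2}+c_R$, at which point \cite[2.5]{HK} forces the remainder, and hence $f$, into $\mathfrak m^2$. You instead prove the contrapositive in one step: writing $f=\sum c_ix_i+m$ with $c_i$ in the coefficient field $k\subseteq R$ and $m\in\mathfrak m^2$ (legitimate because the Herzog--Kunz generators minimally generate $\mathfrak m$ and $R/\mathfrak m=k$), you observe that the linear part has valuation exactly $a_{i_0}$, which cannot be cancelled by $m$ since $a_{i_0}\notin v(\mathfrak m^2)$, so $v(f)\le a_{i_0}\le a_n$. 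Your argument is more elementary and self-contained --- it avoids both the successive-approximation loop and the conductor estimate \cite[2.5]{HK} --- at the mild cost of invoking the coefficient field to get constant coefficients $c_i\in k$ (which the paper's standing hypotheses supply); the paper's version, by contrast, reuses the approximation machinery already set up in Lemma~\ref{lem:frac_ideal_HK} and produces an explicit $g\in\mathfrak m^2$ with $f-g\in\mathfrak m^2$. Both arguments hinge on the same key input, the identity $\{a_1,\dots,a_n\}=v(\mathfrak m)\setminus v(\mathfrak m^2)$ from Theorem~\ref{thm:exist_unique}, just applied to $v(f)$ in one case and to $a_{i_0}$ in the other. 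Your handling of the degenerate cases ($m=0$, $v(m)<a_{i_0}$, $f\notin\mathfrak m$) is also correct.
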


\begin{proof}
Let $f \in R$ be any element with $v(f) > a_n$. Then 
$v(f) \in v(\mathfrak m^2)$, and therefore, as in the proof 
of lemma~\ref{lem:frac_ideal_HK} we find an 
$h \in \mathfrak m^2$ with $v(f-h) > v(f)$. Repeating this 
a finite number of times, we obtain a $g \in \mathfrak m^2$ 
with $v(f-g ) \geq \mathrm{min}_{\mathfrak m^2} + c_R$ (with 
the notation introduced in the proof of 
lemma~\ref{lem:frac_ideal_HK}). Thus by~\cite[2.5]{HK}, 
$f-g \in \mathfrak m^2$, hence $f \in \mathfrak m^2$. 
\end{proof}

\begin{Corollary}\label{cor:cr_msquare} 
$\mathfrak C_R \not\subseteq \mathfrak m^2$ if and only if $a_n \geq c_R$.
\end{Corollary}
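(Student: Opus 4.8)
The plan is to prove both implications directly, using the explicit description $\mathfrak{C}_R = t^{c_R}\cdot\overline{R}$ recorded above together with Proposition~\ref{prop:msquare}. The key observation I will rely on is that $\mathfrak{C}_R = t^{c_R}\overline{R}$ consists precisely of those $h \in \overline{R}$ with $v(h) \geq c_R$; in particular every element of $R$ whose valuation is at least $c_R$ automatically lies in $\mathfrak{C}_R$.

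For the implication ``$a_n \geq c_R \Rightarrow \mathfrak{C}_R \not\subseteq \mathfrak{m}^2$'', I would exhibit an explicit element of $\mathfrak{C}_R$ lying outside $\mathfrak{m}^2$. The hypothesis $a_n \geq c_R$ gives $t^{a_n} \in \mathfrak{C}_R$ (equivalently, any Herzog--Kunz generator $x_n \in R \subseteq \overline{R}$, having $v(x_n) = a_n \geq c_R$, lies in $\mathfrak{C}_R$). On the other hand, by Theorem~\ref{thm:exist_unique} we have $a_n \in v(\mathfrak{m}) \setminus v(\mathfrak{m}^2)$, so no element of valuation $a_n$ can belong to $\mathfrak{m}^2$; in particular $t^{a_n} \notin \mathfrak{m}^2$. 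Hence $\mathfrak{C}_R \not\subseteq \mathfrak{m}^2$.

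For the converse, I would suppose $\mathfrak{C}_R \not\subseteq \mathfrak{m}^2$ and pick $f \in \mathfrak{C}_R$ with $f \notin \mathfrak{m}^2$. Since $f \in \mathfrak{C}_R = t^{c_R}\overline{R}$, we have $v(f) \geq c_R$. By the contrapositive of Proposition~\ref{prop:msquare}, the condition $f \notin \mathfrak{m}^2$ forces $v(f) \leq a_n$. Combining the two inequalities yields $c_R \leq v(f) \leq a_n$, so $a_n \geq c_R$.

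I do not expect a genuine obstacle: the statement is essentially a repackaging of Proposition~\ref{prop:msquare} together with the valuative description of the conductor, so the only point requiring care is verifying that $t^{a_n}$ (or $x_n$) really does lie in $\mathfrak{C}_R$, which is immediate from $\mathfrak{C}_R = \{\, h \in \overline{R} : v(h) \geq c_R \,\}$.
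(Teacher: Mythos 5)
Your proof is correct and is exactly the argument the paper intends: the corollary is stated without proof as an immediate consequence of Proposition~\ref{prop:msquare} together with the description $\mathfrak{C}_R = t^{c_R}\overline{R}$, and your two implications (using $t^{a_n}\in\mathfrak{C}_R\setminus\mathfrak{m}^2$ since $a_n\notin v(\mathfrak{m}^2)$, and the contrapositive of the proposition for the converse) fill in precisely those details.
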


\begin{beispiel}\label{ex:cond_not_m2}
Let $R = \C[[t^{6}, t^{9}+t^{10}, 2 t^{19}+t^{20}+t^{41}]]$ 
with $V(R) = \{6,9,19,41\}$ as in example~\ref{ex:rep1}. 
Then the conductor degree $c_R = 36$ and the conductor ideal 
$\mathfrak C_R$ is not contained in $ \mathfrak m^2$ as it 
contains $x_3 = t^{41}$ which is not in $\mathfrak m^2$.  

This example shows that in Corollary~\ref{cor:cr_msquare} it 
is not sufficient to work with a minimal set of generators 
$x_1, \ldots, x_n$ such that $v(x_1), \ldots, v(x_n)$ 
is part of a minimal set of generators of $V(R)$.
\end{beispiel}

\begin{Corollary}\label{cor:cr_pure_power}
If $\mathrm{char}(k) = 0$ and $\mathfrak C_R \not\subseteq 
\mathfrak m^2$, then $R$ has a Herzog--Kunz parametrization 
$R = k[[x_1, \ldots, x_n]]$ with $x_1 = t^{a_1}$ 
and $x_n = t^{a_n}$, and 
	$$ \omega = a_n x_n dx_1 - a_1 x_1 dx_n \in 
    \widetilde{\Omega}^1_{R/k} $$
is a nonzero torsion element.  
\end{Corollary}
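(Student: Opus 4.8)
The plan is to produce the concrete parametrization first and then exhibit the torsion element by a direct computation in the module of universally finite differentials. By Corollary~\ref{cor:cr_msquare}, the hypothesis $\mathfrak C_R \not\subseteq \mathfrak m^2$ gives $a_n \geq c_R$. Then Corollary~\ref{cor:mon_gen} applies (we are in characteristic zero with $k$ algebraically closed): we may choose the parameter $t$ of $\overline R$ so that $x_1 = t^{a_1}$, and, since $a_n \geq c_R$, simultaneously $x_n = t^{a_n}$. So we have a Herzog--Kunz parametrization $R = k[[x_1, \ldots, x_n]]$ with $x_1 = t^{a_1}$, $x_n = t^{a_n}$, and in particular $x_1^{a_n} = x_n^{a_1}$ holds in $R$.

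Next I would show $\omega = a_n x_n\, dx_1 - a_1 x_1\, dx_n$ is a torsion element. Differentiating the relation $x_1^{a_n} = x_n^{a_1}$ in $\widetilde\Omega^1_{R/k}$ gives $a_n x_1^{a_n-1}\, dx_1 = a_1 x_n^{a_1-1}\, dx_n$. Multiplying $\omega$ by a suitable monomial in $x_1, x_n$ (namely, using $x_1^{a_n} = x_n^{a_1}$ to clear denominators) one checks that $x_1^{a_n - 1}\cdot\omega = a_n x_n x_1^{a_n-1}\,dx_1 - a_1 x_1^{a_n}\,dx_n = x_n\bigl(a_n x_1^{a_n-1}\,dx_1\bigr) - a_1 x_n^{a_1}\,dx_n = x_n\bigl(a_1 x_n^{a_1-1}\,dx_n\bigr) - a_1 x_n^{a_1}\,dx_n = 0$. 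Hence $\omega$ is annihilated by $x_1^{a_n-1}\neq 0$, so it is a torsion element of $\widetilde\Omega^1_{R/k}$ (which is a finitely generated $R$-module over a one-dimensional domain, so ``torsion'' is the right notion).

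It remains to prove $\omega \neq 0$. Since $\widetilde\Omega^1_{R/k}$ maps compatibly to $\widetilde\Omega^1_{\overline R/k} = \overline R\, dt = k[[t]]\, dt$ under the inclusion $R \hookrightarrow \overline R$, it suffices to show the image of $\omega$ in $k[[t]]\,dt$ is nonzero. Using $x_1 = t^{a_1}$ and $x_n = t^{a_n}$ we get $dx_1 = a_1 t^{a_1-1}\,dt$ and $dx_n = a_n t^{a_n-1}\,dt$, so the image of $\omega$ is $\bigl(a_n t^{a_n}\cdot a_1 t^{a_1-1} - a_1 t^{a_1}\cdot a_n t^{a_n-1}\bigr)\,dt = 0$. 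So the naive comparison map kills $\omega$, which is exactly why $\omega$ is torsion — and this means nonvanishing of $\omega$ itself must be detected \emph{before} passing to $\overline R$.

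The main obstacle, then, is this last point: proving $\omega \neq 0$ in $\widetilde\Omega^1_{R/k}$ itself. I expect this to follow from the same circle of ideas used in \cite{HMM} and \cite{KD}: one computes $\widetilde\Omega^1_{R/k}$ as the cokernel of the Jacobian matrix of a presentation $R = k[[X_1,\ldots,X_n]]/I$, with $X_i \mapsto x_i$, and shows the class of the relation vector $(a_n x_n, 0, \ldots, 0, -a_1 x_1)$ is not in the submodule generated by the rows $(\partial_1 f, \ldots, \partial_n f)$, $f \in I$; equivalently, one exhibits a $k$-derivation $R \to M$ into some $R$-module sending $\omega$ to a nonzero element, or invokes the fact (as in \cite[\S 11]{KD} and the torsion computations of \cite{HMM}) that the $\mathfrak m$-adic or valuation-theoretic leading term of $\omega$ survives. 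The key structural input available to us is that $a_n \geq c_R$, so that $x_n = t^{a_n}$ lies in the conductor; this forces $x_1 x_n^{a_1 - 1}\notin \mathfrak m\cdot\mathfrak C_R$ in a way that should obstruct $\omega$ from being a boundary. I would model the argument on the proof of the corresponding statement in \cite{HMM}, now made rigorous because we have the genuine Herzog--Kunz generators at our disposal rather than an arbitrary generating set whose valuations merely lie among the $w_i$ (the failure of which is precisely the content of Example~\ref{ex:cond_not_m2}).
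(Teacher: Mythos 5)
Your proposal follows essentially the same route as the paper: the paper's entire proof is to combine Corollary~\ref{cor:mon_gen} (which, via Corollary~\ref{cor:cr_msquare} giving $a_n \geq c_R$, yields the parametrization with $x_1 = t^{a_1}$ and $x_n = t^{a_n}$) with the computation in the proof of \cite[Theorem 3.1]{HMM}, which is exactly what you do. Your explicit verification that $x_1^{a_n-1}\cdot\omega = 0$ is a correct and welcome addition, and your deferral of the nonvanishing $\omega \neq 0$ to the Jacobian-presentation argument of \cite{HMM} matches the paper's own citation of that proof, so no gap beyond what the paper itself outsources.
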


\begin{proof}
Use Corollary~\ref{cor:mon_gen}  and the proof of 
\cite[Theorem 3.1]{HMM}.
\end{proof}

\medbreak

Assume from now on that $\mathrm{char}(k) = 0$, $a_1, 
\ldots, a_n$ is the Herzog--Kunz sequence of $R$ and that 
$R$ is described by a parametrization 
	$$ R = k[[x_1, \ldots , x_n]] $$
with $v(x_i) = a_i$. By Hensel's Lemma (\cite[Theorem 7.3]{Ei}, 
see also \cite[Proof of Theorem 3.1]{HMM}) we may (and will 
always) assume in this situation that $x_1 = t^{a_1}$. 
For $i > 1$ write $x_i = \alpha_i \cdot t^{a_i}$ for some 
unit $\alpha_i \in R$, and write 
	$$ \alpha_i = \sum\limits_{j=0}^{\infty} u_{i,j} t^j $$
for suitable $u_{i,j} \in k$ and with $u_{i,0} \neq 0$. After 
multiplying $x_i$ with $\frac {1}{u_{i,0}}$, we may assume 
that $u_{i,0} = 1$. As in [MM], \S 4, we set 
	$$ \mathit{o}(\alpha_i) = v(\alpha_i - 1) = 
    \mathrm{min}\{j > 0 \,\vert \, u_{i,j} \neq 0 \} $$
if $\alpha_i-1 \neq 0$ and $\mathit{o}(\alpha_i) = \infty$ 
otherwise. 

\begin{proposition}\label{prop:MM_4_2_1}
Suppose there exist $i, d \geq 2$ such that $\mathit{o}
(\alpha_d) + a_i \geq c_R$ and $\mathit{o}(\alpha_d) < 
\infty$ and suppose $\mathit{o}(\alpha_d) \leq 
\mathit{o}(\alpha_i) \leq \infty$. Then $R$ has a 
parametrization 
	$$ R = k[[\widetilde{x_1}, \ldots, \widetilde{x_n}]] $$
with Herzog--Kunz generators $\widetilde{x_1}, \ldots, 
\widetilde{x_n}$ such that 
	$$ \begin{array} {l c l l}
	\widetilde{x_j} & = & x_j & \text{for }\, j \neq  i \\
	\widetilde{x_i} & = & t^{a_i} & 
	\end{array} $$ 
\end{proposition}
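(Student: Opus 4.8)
The plan is to take $\widetilde{x_i} = t^{a_i}$ and $\widetilde{x_j} = x_j$ for $j \neq i$, and to verify the two things this requires: that $t^{a_i}$ actually lies in $R$, and that the modified tuple still generates $R$ as a power series ring.

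First I would examine the difference $x_i - t^{a_i}$. Writing $x_i = \alpha_i t^{a_i}$ with $\alpha_i = 1 + \sum_{j \geq o(\alpha_i)} u_{i,j} t^j$ and $u_{i,o(\alpha_i)} \neq 0$ (in the normalization fixed before the proposition), we have $x_i - t^{a_i} = (\alpha_i - 1)t^{a_i}$; this is $0$ when $\alpha_i = 1$, i.e.\ $o(\alpha_i) = \infty$, in which case $\widetilde{x_j} = x_j$ for all $j$ already works, and otherwise it has $t$-valuation exactly $o(\alpha_i) + a_i$. The hypotheses $o(\alpha_d) \leq o(\alpha_i)$ and $o(\alpha_d) + a_i \geq c_R$ force $v(x_i - t^{a_i}) \geq c_R$ in every case, so $x_i - t^{a_i} \in t^{c_R}\overline{R} = \mathfrak{C}_R \subseteq R$.

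Since $x_i \in R$ as well, this gives $t^{a_i} = x_i - (x_i - t^{a_i}) \in R$. Now set $\widetilde{x_i} = t^{a_i}$ and $\widetilde{x_j} = x_j$ for $j \neq i$. Each $\widetilde{x_j}$ lies in $R$ and satisfies $v(\widetilde{x_j}) = a_j$, and since $a_1 < \cdots < a_n$ is the Herzog--Kunz sequence of $R$, part (2) of Theorem~\ref{thm:exist_unique} applies directly to $(\widetilde{x_1}, \ldots, \widetilde{x_n})$ and yields $R = k[[\widetilde{x_1}, \ldots, \widetilde{x_n}]]$. By definition these are then Herzog--Kunz generators, and they have the stated form.

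I do not expect a genuine obstacle once Theorem~\ref{thm:exist_unique} is in hand: the only points needing care are the convention around $o(\alpha_i) = \infty$ (the degenerate case $x_i = t^{a_i}$, where the conclusion is immediate) and deducing the inequality $o(\alpha_i) + a_i \geq c_R$ correctly — this is precisely the role played by the auxiliary index $d$, which feeds in the lower bound $o(\alpha_i) \geq o(\alpha_d)$. It is worth noting that the Herzog--Kunz characterization is what makes the ``still a generating set'' step essentially free, replacing the Nakayama/linear-independence bookkeeping used in the analogous argument that this proposition is modeled on.
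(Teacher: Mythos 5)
Your proposal is correct and follows essentially the same route as the paper: both show $x_i - t^{a_i} = (\alpha_i-1)t^{a_i}$ has valuation $o(\alpha_i)+a_i \geq o(\alpha_d)+a_i \geq c_R$, hence lies in $\mathfrak{C}_R \subseteq R$, so $t^{a_i} \in R$, and then invoke Theorem~\ref{thm:exist_unique} to conclude the modified tuple is a set of Herzog--Kunz generators. Your explicit handling of the degenerate case $o(\alpha_i) = \infty$ is a small point the paper leaves implicit.
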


\begin{proof}
As noted in the proof of \cite[4.2 (1)]{MM}, the assumptions 
imply that 
	$$ \mathit{o}(\alpha_i) + a_i  \geq \mathit{o}(\alpha_d) 
    + a_i \geq c_R $$
hence 
	$$ x_i = \alpha_i \cdot t^{a_i} = t^{a_i} + b \cdot 
    t^{\mathit{o}(\alpha_i) +a_i} $$ 
with $b\cdot t^{\mathit{o}(\alpha_i) +a_i}\in \mathfrak C_R 
\subseteq R$. Thus 
	$$ \widetilde{x_i} = t^{a_i} = x_i - b \cdot 
    t^{\mathit{o}(\alpha_i) +a_i} \in R $$
with $v(\widetilde{x_i}) = a_i$, and therefore by 
theorem~\ref{thm:exist_unique} we may replace $x_i$ by 
$\widetilde{x_i}$. 
\end{proof}

\begin{proposition}\label{prop:MM_4_2_2}
Suppose there exist $i, d \geq 2$ such that 
$\mathit{o}(\alpha_d) + a_i \geq c_R$ and 
$\mathit{o}(\alpha_d) < \infty$ and suppose $i \leq d$. Then 
$R$ has a parametrization 
	$$ R = k[[\widetilde{x_1}, \ldots, \widetilde{x_n} ]] $$
with Herzog--Kunz generators $\widetilde{x_1}, \ldots, 
\widetilde{x_n}$ such that 
	$$ \begin{array} {l c l l}
	\widetilde{x_j} & = & x_j & \text{for }\, j \neq d \\
	\widetilde{x_d} & = & t^{a_d} & 
	\end{array} $$ 
\end{proposition}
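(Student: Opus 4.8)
## Proof plan

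The plan is to mimic the proof of Proposition~\ref{prop:MM_4_2_1}, but this time we modify the generator $x_d$ rather than $x_i$, using the hypothesis $i \le d$ in place of the hypothesis $\mathit{o}(\alpha_d) \le \mathit{o}(\alpha_i)$ to obtain the key inequality $\mathit{o}(\alpha_d) + a_d \ge c_R$. The point is that from $i \le d$ we get $a_i \le a_d$ (the Herzog--Kunz sequence is strictly increasing), so
$$ \mathit{o}(\alpha_d) + a_d \ge \mathit{o}(\alpha_d) + a_i \ge c_R. $$

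First I would write $x_d = \alpha_d t^{a_d} = t^{a_d} + b\cdot t^{\mathit{o}(\alpha_d) + a_d} + (\text{higher order terms})$, where $\mathit{o}(\alpha_d) < \infty$ guarantees $b \ne 0$ is the first nonzero coefficient beyond the leading term. By the inequality just established, every term of $x_d - t^{a_d}$ has valuation $\ge \mathit{o}(\alpha_d) + a_d \ge c_R$, hence $x_d - t^{a_d} \in t^{c_R}\overline{R} = \mathfrak{C}_R \subseteq R$. Therefore $\widetilde{x_d} := t^{a_d} = x_d - (x_d - t^{a_d}) \in R$, and clearly $v(\widetilde{x_d}) = a_d$. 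Setting $\widetilde{x_j} = x_j$ for $j \ne d$, the tuple $(\widetilde{x_1}, \ldots, \widetilde{x_n})$ consists of elements of $R$ with $v(\widetilde{x_j}) = a_j$ for all $j$, so by Theorem~\ref{thm:exist_unique}(2) we conclude $R = k[[\widetilde{x_1}, \ldots, \widetilde{x_n}]]$ with $\widetilde{x_1}, \ldots, \widetilde{x_n}$ being Herzog--Kunz generators. This completes the argument.

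There is essentially no obstacle here: the whole proof rests on the single observation that $i \le d$ forces $a_i \le a_d$, after which the reduction is identical to the previous proposition and Theorem~\ref{thm:exist_unique} does the rest. The only point requiring a word of care is that $x_d - t^{a_d}$ lands in the conductor ideal as a whole (not merely its leading term), which follows because $\mathfrak{C}_R = t^{c_R}\overline{R}$ is closed under the $t$-adic topology, so any element of $\overline{R}$ all of whose terms have valuation $\ge c_R$ lies in $\mathfrak{C}_R$.
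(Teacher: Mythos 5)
Your proof is correct and follows essentially the same route as the paper: both derive $\mathit{o}(\alpha_d)+a_d \geq \mathit{o}(\alpha_d)+a_i \geq c_R$ from $i \leq d$ (hence $a_i \leq a_d$), observe that the tail $x_d - t^{a_d}$ lies in $\mathfrak C_R \subseteq R$, and invoke Theorem~\ref{thm:exist_unique} to replace $x_d$ by $t^{a_d}$. Your explicit remark that the entire tail, not just its leading term, lands in $\mathfrak C_R = t^{c_R}\overline{R}$ is a small clarification of what the paper leaves implicit in its notation.
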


\begin{proof}
As noted in the proof of \cite[4.2 (2)]{MM}, the assumptions 
imply that 
	$$ \mathit{o}(\alpha_d)+a_d \geq \mathit{o}(\alpha_d) + 
    a_i \geq c_R $$
hence again
	$$ x_d = \alpha_d \cdot t^{a_d} = 
	t^{a_d} + b \cdot t^{\mathit{o}(\alpha_d) +a_d} $$ 
with $b\cdot t^{\mathit{o}(\alpha_d) +a_d}\in \mathfrak C_R 
\subseteq R$, and we may continue as above. 
\end{proof}

\begin{notiz}
If in \cite[Theorem 4.2]{MM},  a parametrization 
	$$ R = k[[x_1, \ldots, x_n]] $$
with Herzog--Kunz generators $x_1, \ldots, x_n$ is chosen, 
then all the arguments work.
\end{notiz}

\bigbreak

Suppose now that $\mathfrak C_R \subseteq \mathfrak m^2$, 
let $c_R$ be the conductor degree of $R$ and set 
	$$ i_0 = \mathrm{max}\{i \in \{1, \ldots, n\}\,\vert \,
    \, a_i < c_R-a_1 \} $$
Furthermore set 
	$$ S = R[ \frac {\mathfrak C_R}{x_1}] $$
and let 
	$$ s = s(R) = 
	\mathrm{dim}_k \left( \frac {\left(\mathfrak C_R, 
    x_1\right)}{x_1} \right) $$ 
be the reduced type of $R$ \cite[\S 4]{HMM} (c.f. \cite{maitra2023extremal}). 

By \cite[(4.2)]{HMM} we have 
	$$ \mathfrak C_S = t^{c_R-a_1} \cdot \overline{R} = 
	\left(t^{c_R-a_1}, t^{c_R-a_1+1}, \ldots , t^{c_R-1} 
    \right) \cdot R $$
hence $c_S = c_R-a_1$, and there are $s$ elements  
$b_1 < b_2 < \cdots < b_s$ of the $c_R - a_1+i$ 
($i = 0,\ldots, a_1-1$) that are not in the 
value semigroup $V(R)$ of $R$. 

Then by \cite[\S 4]{HMM},  we have 
	$$ S = R[t^{b_1}, \ldots , t^{b_s}] $$
and $\mathrm{edim}(S) = n+s$ by \cite[Theorem 4.6]{HMM},  
implying that $x_1, \ldots, x_n, t^{b_1}, \ldots , t^{b_s}$
are minimally generating $\mathfrak m_S$, and the value 
semigroup 
	$$ V(S) = \langle V(R), b_1, \ldots, b_s \rangle $$
is generated by $V(R)$ and the $b_i$. Set 
	$$ i_0 = \mathrm{max}\{ i \, \vert a_i < c_R -a_1\} $$

\begin{proposition}\label{prop:MM_4_4_1}
In the above situation the Herzog--Kunz sequence 
$\widetilde{a_1} < \cdots < \widetilde{a_N}$ of $S$ 
satisfies 
	$$ \{\widetilde{a_{i_0+1}},\ldots,\widetilde{a_{n+s}}\} 
    = \{a_{i_0+1}, \ldots, a_{n}, b_1,\ldots, b_s \} $$
and $S$ has a parametrization 
	$$ S = k[[\widetilde{x_1},\ldots,\widetilde{x_{n+s}}]] $$
with Herzog--Kunz generators $\widetilde{x_1},\ldots,
\widetilde{x_{n+s}}$  such that 
	$$ \begin{array} {l c l l}
	\widetilde{x_j} & = & x_j & \text{for }\, j \leq i_0 \\
	\widetilde{x_j} & = & t^{\widetilde{a_j}} & \text{for } 
    \, j > i_0 \\
	\end{array} $$  
\end{proposition}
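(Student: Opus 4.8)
The plan is to reduce the whole statement to a computation of the two numerical sets $v(\mathfrak m_S)$ and $v(\mathfrak m_S^2)$. Once these are known, Theorem~\ref{thm:exist_unique} applied to $S$ identifies the Herzog--Kunz sequence of $S$ with $v(\mathfrak m_S)\setminus v(\mathfrak m_S^2)$, and it also tells us that \emph{any} family of elements of $S$ having the prescribed valuations serves as a family of Herzog--Kunz generators, so the parametrization part will follow simply by exhibiting such elements. Before that I would record the elementary inequality $c_R\geq 2a_1$: if $c_R\leq 2a_1-1$, then $2a_1-1\in[c_R,\infty)\subseteq v(\mathfrak m)$ while $2a_1-1<2a_1=\mathrm{min}\,v(\mathfrak m^2)$, so $2a_1-1\in v(\mathfrak m)\setminus v(\mathfrak m^2)=\{a_1,\dots,a_n\}$, forcing $a_n\geq 2a_1-1\geq c_R$ and contradicting $\mathfrak C_R\subseteq\mathfrak m^2$ (Corollary~\ref{cor:cr_msquare}). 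Hence $c_S=c_R-a_1\geq a_1$, so in particular $2c_S\geq c_R$ and $a_1+c_S=c_R$.

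Next I would compute $v(\mathfrak m_S^2)$. With the minimal generating set $\mathfrak m_S=(x_1,\dots,x_n,t^{b_1},\dots,t^{b_s})$ recalled above, the generators of $\mathfrak m_S^2$ are the products $x_ix_j$ (valuation $a_i+a_j$), $x_it^{b_l}$ (valuation $\geq a_1+c_S=c_R$, as $b_l\geq c_S$), and $t^{b_l}t^{b_m}$ (valuation $\geq 2c_S\geq c_R$); writing $S=R+\sum_l Rt^{b_l}$ (note $t^{b_l}t^{b_m}\in\mathfrak C_R\subseteq R$ since $2c_S\geq c_R$) gives $\mathfrak m^2 S=\mathfrak m^2+\sum_l t^{b_l}\mathfrak m^2$ with each $t^{b_l}\mathfrak m^2$ of valuation $\geq c_S+2a_1\geq c_R$. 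Hence $v(\mathfrak m_S^2)\cap[0,c_R)=v(\mathfrak m^2)\cap[0,c_R)$; moreover $x_1\mathfrak C_S\subseteq\mathfrak m_S^2$ and $x_1\mathfrak C_S=t^{a_1}\cdot t^{c_S}\overline{R}=\mathfrak C_R$, so $[c_R,\infty)\subseteq v(\mathfrak m_S^2)$, while also $[c_R,\infty)\subseteq v(\mathfrak m^2)$ because $\mathfrak C_R\subseteq\mathfrak m^2$; altogether $v(\mathfrak m_S^2)=v(\mathfrak m^2)$. For $v(\mathfrak m_S)$, the description $V(S)=\langle V(R),b_1,\dots,b_s\rangle$ together with the fact that $c_S$ is the conductor degree of $S$ gives $V(S)\cap[0,c_S)=V(R)\cap[0,c_S)$ (a value below $c_S$ cannot involve any $b_j\geq c_S$), hence $v(\mathfrak m_S)=v(\mathfrak m)\cup[c_S,\infty)$.

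Assembling, these two computations give
$$ v(\mathfrak m_S)\setminus v(\mathfrak m_S^2)=\big(v(\mathfrak m)\setminus v(\mathfrak m^2)\big)\cup\big([c_S,\infty)\setminus v(\mathfrak m^2)\big)=\{a_1,\dots,a_n\}\cup\big([c_S,c_R)\setminus v(\mathfrak m^2)\big), $$
using $[c_R,\infty)\subseteq v(\mathfrak m^2)$. An integer $w\in[c_S,c_R)$ lies in the last set exactly when either $w\notin V(R)$, in which case $w$ is one of $b_1,\dots,b_s$ by definition, or $w\in V(R)$ and $w\notin v(\mathfrak m^2)$, i.e.\ $w\in\{a_i\mid a_i\geq c_S\}=\{a_{i_0+1},\dots,a_n\}$ (using $a_n<c_R$ and $a_i<c_S\Leftrightarrow i\leq i_0$). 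Thus the Herzog--Kunz sequence of $S$, as a set, equals $\{a_1,\dots,a_n\}\cup\{b_1,\dots,b_s\}$, and this has exactly $n+s=\mathrm{edim}(S)$ elements since the $a_i$ lie in $V(R)$ and the $b_j$ do not. Its elements below $c_S$ are precisely $a_1<\cdots<a_{i_0}$, so after sorting $\widetilde{a_j}=a_j$ for $j\leq i_0$ and $\{\widetilde{a_{i_0+1}},\dots,\widetilde{a_{n+s}}\}=\{a_{i_0+1},\dots,a_n,b_1,\dots,b_s\}$. For the parametrization, take $\widetilde{x_j}=x_j\in R\subseteq S$ for $j\leq i_0$ and $\widetilde{x_j}=t^{\widetilde{a_j}}$ for $j>i_0$; the latter lies in $\mathfrak C_S=t^{c_S}\overline{R}\subseteq S$ since $\widetilde{a_j}\geq c_S$, and in all cases $v(\widetilde{x_j})=\widetilde{a_j}$, so Theorem~\ref{thm:exist_unique} yields $S=k[[\widetilde{x_1},\dots,\widetilde{x_{n+s}}]]$ with $\widetilde{x_j}=x_j$ for $j\leq i_0$ and $\widetilde{x_j}=t^{\widetilde{a_j}}$ for $j>i_0$.

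The step I expect to be the main obstacle is the valuation computation of $\mathfrak m_S^2$: a priori the ``mixed'' products $x_it^{b_l}$ and the ``pure-tail'' products $t^{b_l}t^{b_m}$ could have valuation below $c_R$, which would corrupt the identification of the Herzog--Kunz sequence of $S$ in the decisive range $[c_S,c_R)$. This is precisely what the inequality $c_R\geq 2a_1$ rules out, so the one genuinely new input is to derive that inequality from the hypothesis $\mathfrak C_R\subseteq\mathfrak m^2$; everything else is bookkeeping with the structure of $S$ already recalled from \cite{HMM}.
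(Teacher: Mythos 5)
Your proof is correct, and its overall skeleton matches the paper's: apply Theorem~\ref{thm:exist_unique} to $S$, use the structure $S=R[t^{b_1},\dots,t^{b_s}]$, $\mathfrak C_S=t^{c_R-a_1}\overline{R}$ from \cite{HMM}, and compare the valuation sets of $\mathfrak m_S,\mathfrak m_S^2$ with those of $\mathfrak m,\mathfrak m^2$. Where you genuinely diverge is in the decisive range $[c_S,c_R)$. The paper only identifies the Herzog--Kunz elements of $S$ below $c_S$ by direct comparison, and then handles $[c_S,c_R)$ by a counting argument: it imports $\mathrm{edim}(S)=n+s$ from \cite[Theorem 4.6]{HMM} to get equal cardinalities and then verifies a single inclusion (any $u\in V(S)\cap[c_S,c_R)$ outside $\{a_{i_0+1},\dots,a_n,b_1,\dots,b_s\}$ lies in $V(R)\cap v(\mathfrak m^2)\subseteq v(\mathfrak m_S^2)$). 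You instead compute both sets outright, proving the clean identities $v(\mathfrak m_S^2)=v(\mathfrak m^2)$ and $v(\mathfrak m_S)=v(\mathfrak m)\cup[c_S,\infty)$ and finishing by set arithmetic; this makes the cardinality statement an output rather than an input. The price is the auxiliary inequality $c_R\geq 2a_1$, which you correctly derive from $\mathfrak C_R\subseteq\mathfrak m^2$ via Corollary~\ref{cor:cr_msquare} (if $c_R\leq 2a_1-1$ then $2a_1-1\in v(\mathfrak m)\setminus v(\mathfrak m^2)$ forces $a_n\geq c_R$), and which is needed to control the pure-tail products $t^{b_l}t^{b_m}$ and to justify $S=R+\sum_l Rt^{b_l}$. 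This inequality is not made explicit in the paper, although its ``similarly'' step for $\mathfrak m_S^2$ quietly relies on the same kind of control of the $S$-coefficients; your version is therefore somewhat more self-contained and arguably tightens that step, at the cost of a longer computation.
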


\begin{proof}
The Herzog--Kunz sequence $\widetilde{a_1} < \cdots < 
\widetilde{a_N}$ of $S$ satifies $N = n+s$ as 
$\mathrm{edim}(S) = n+s$. Thus it suffices to show that 
	$$ \{ \widetilde{a_1} ,\ldots ,\widetilde{a_{n+s}}\} = 
	\{a_1, \ldots, a_n, b_1, \ldots b_s\} $$
for then for $j > i_0$, $\widetilde{a_j} \geq c_S$, and 
therefore $t^{\widetilde{a_j}} \in S$.

As $\mathfrak m_S = (x_1, \ldots, x_n, t^{b_1}, \ldots, 
t^{b_s})$, any $s \in \mathfrak m$ may be written as 
	$$ s = r + \sigma $$
with $r \in \mathfrak m$ and $v(\sigma) \geq b_1 $, and 
similarly any $s_2 \in \mathfrak m_S^2 = (x_i \cdot x_j, 
x_i \cdot t^{b_l}, t^{b_l} \cdot t^{b_m})$ 
may be written as 
	$$ s_2 = r_2 + \sigma_2 $$
with $r_2 \in \mathfrak m^2$ and $v(\sigma_2) \geq b_1 \,
(\geq c_R -a_1)$. Thus 
	$$ V_1 = \{w\in v(\mathfrak m)\,\vert\,w < c_R -a_1\} = 
	\{w\in v(\mathfrak m_S)\,\vert \, w < c_R -a_1\} = W_1 $$
and 
	$$ V_2=\{ w\in v(\mathfrak m^2)\,\vert \,w<c_R-a_1\} = 
	\{w \in v(\mathfrak m^2_S)\,\vert \,w < c_R -a_1\}=W_2 $$
implying that 
	$$ \{ \widetilde{a_i} \, \vert \, \widetilde{a_i} 
    \leq c_R-a_1 \} = W_1 \setminus W_2
	= V_1 \setminus V_2 =\{a_i\, \vert \,a_i\leq c_R-a_1 \} 
    = \{a_1, \ldots, a_{i_0} \} $$
It remains to show that 
	$$\{\widetilde{a_{i_0+1}},\ldots,\widetilde{a_{n+s}}\}= 
	\{ a_{i_0+1}, \ldots, a_n, b_1, \ldots, b_s \} $$
As both sets have the same cardinality, it suffices to show 
the inclusion $\subseteq$. 

Note that $\widetilde{a_{n+s}} < c_R$ as otherwise 
	$$ \widetilde{a_{n+s}} \in v(\mathfrak C_R) \subseteq 
    v(\mathfrak m^2) \subseteq v(\mathfrak m_S^2) $$
and similarly $a_n < c_R$. Thus both sets are contained in 
$\{c_R-a_1, \ldots, c_R-1\}$. 

If $u \in \{w \in V(S) \, \vert \, c_R-a_1 \leq w < c_R \} 
\setminus \{ a_{i_0+1}, \ldots, a_n, b_1, \ldots, b_s\}$, 
then $u \in V(R)$ (by the choice of $b_1, \ldots, b_n$) 
and thus 
$u \in v(\mathfrak m^2)$ (by the choice of $a_{i_0+1}, 
\ldots, a_n$), hence $u \in v(\mathfrak m_S^2)$, 
and therefore 
	$$ u \notin \{ \widetilde{a_{i_0+1}}, \ldots, 
    \widetilde{a_{n+s}} \} $$
completing the proof. 
 
\end{proof}

\begin{Corollary}
If in the above situation $a_1 + a_n \geq c_R$, then $S$ 
has a parametrization 
	$$ S=k[[\widetilde{x_1},\ldots,\widetilde{x_{n+s}}]] $$
with Herzog--Kunz generators $\widetilde{x_1}, \ldots, 
\widetilde{x_{n+s}}$ such that $t^{a_1}, 
t^{a_n}, t^{b_1}, \ldots t^{b_n}$ are among the  
$\widetilde{x_i}$. 

Hence if in \cite[Theorem 4.4]{MM}, a parametrization  
	$$ R = k[[x_1, \ldots, x_n]] $$
with Herzog-Kunz generators $\widetilde{x_1}, \ldots, 
\widetilde{x_{n+s}}$ is chosen, then all the arguments work.
\end{Corollary}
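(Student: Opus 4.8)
The plan is to extract the statement directly from Proposition~\ref{prop:MM_4_4_1}, which already produces the desired parametrization: it gives Herzog--Kunz generators $\widetilde{x_1}, \ldots, \widetilde{x_{n+s}}$ of $S$ with $\widetilde{x_j} = x_j$ for $j \leq i_0$, $\widetilde{x_j} = t^{\widetilde{a_j}}$ for $j > i_0$, and $\{\widetilde{a_{i_0+1}}, \ldots, \widetilde{a_{n+s}}\} = \{a_{i_0+1}, \ldots, a_n, b_1, \ldots, b_s\}$. Since $v(\widetilde{x_j}) = v(x_j) = a_j$ for $j \leq i_0$, this also gives $\{\widetilde{a_1}, \ldots, \widetilde{a_{n+s}}\} = \{a_1, \ldots, a_n, b_1, \ldots, b_s\}$. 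So it only remains to check that $t^{a_1}$, $t^{a_n}$ and $t^{b_1}, \ldots, t^{b_s}$ occur among the $\widetilde{x_j}$.

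The key elementary point is this: if $u \in \{\widetilde{a_1}, \ldots, \widetilde{a_{n+s}}\}$ with $u \geq c_R - a_1$, then, writing $u = \widetilde{a_j}$, necessarily $j > i_0$ --- for if $j \leq i_0$ then $\widetilde{a_j} = a_j < c_R - a_1$ (by the definition of $i_0$ and the monotonicity of the $a_i$), contradicting $\widetilde{a_j} = u \geq c_R - a_1$. Hence $\widetilde{x_j} = t^{\widetilde{a_j}} = t^u$ is one of the listed generators of $S$. Now each $b_\ell$ lies in $\{c_R - a_1, \ldots, c_R - 1\}$ by construction, so $b_\ell \geq c_R - a_1$; and the hypothesis $a_1 + a_n \geq c_R$ says precisely that $a_n \geq c_R - a_1$. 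Applying the observation to $u = b_\ell$ (for each $\ell$) and to $u = a_n$ shows that $t^{b_1}, \ldots, t^{b_s}$ and $t^{a_n}$ all appear among the $\widetilde{x_j}$; note that the $b_\ell$ part does not use the hypothesis at all.

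For $t^{a_1}$ I would invoke the standing assumption $x_1 = t^{a_1}$: writing $a_1 = \widetilde{a_j}$, either $j > i_0$, and then $\widetilde{x_j} = t^{\widetilde{a_j}} = t^{a_1}$, or $j \leq i_0$, and then $\widetilde{a_j} = a_j$ forces $j = 1$, so $\widetilde{x_1} = x_1 = t^{a_1}$. This settles the first assertion. The concluding ``hence'' clause then follows by inspection of \cite[Theorem 4.4]{MM}: the only property of the chosen minimal generating set of $\mathfrak m_S$ that its proof relies on is that $t^{a_1}$, $t^{a_n}$ and the $t^{b_\ell}$ occur among the generators, and we have just shown this is automatic once $x_1, \ldots, x_n$ are taken to be Herzog--Kunz generators of $R$; so that argument goes through verbatim. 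I do not expect any genuine obstacle: all the substance has been absorbed into Proposition~\ref{prop:MM_4_4_1}, and what remains is the one-line index comparison above together with the harmless case split for $a_1$ (the only exponent among $a_1, \ldots, a_n$ that may be $< c_R - a_1$).
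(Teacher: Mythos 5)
Your proposal is correct and follows essentially the same route as the paper: invoke Proposition~\ref{prop:MM_4_4_1}, use the standing normalization $x_1 = t^{a_1}$ from this section, and observe that the hypothesis $a_1 + a_n \geq c_R$ forces $a_n \geq c_R - a_1$ (and each $b_\ell \geq c_R - a_1$ automatically), so the corresponding indices exceed $i_0$ and the corresponding generators are pure powers of $t$. Your write-up is merely a more explicit version of the paper's two-line argument.
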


\begin{proof}
First not that $\widetilde{x_1} = x_1 = t^{a_1}$ by our 
assumption in this section. Furthermore in this case 
$i_0 < c_R -a_1 \leq a_n$, thus 
Proposition~\ref{prop:MM_4_4_1} completes the proof. 
\end{proof}

\section{Changing Parametrizations}

Theorem~\ref{thm:exist_unique} says that, once a 
Herzog--Kunz sequence $(a_1, \ldots, a_n)$ for the ring 
$R$ is determined, any set of elements $x_1, \ldots , x_n 
\in R$ with $v(x_i) = a_i$ will (minimally) generate $R$. 
Thus any such sequence may be modified quite arbitrarily and 
will stay a set of generators as long as $v(x_i) = a_i$. For 
a general set of generators such a result is not true, but 
in this section it will be shown that any set $y_1, \ldots, 
y_n$ of generators can be modified in sufficiently high 
degrees and will stay a set of generators. 

Throughout this section we consider the usual situation 
	$$ R = k[[y_1, \ldots , y_n]] \subseteq \overline{R} 
        = k[[t]] $$
and we assume that $a_1 < \cdots < a_n$ is the Herzog--Kunz 
sequence associated to $R$. We assume that $y_1, \ldots, 
y_n$ is minimally generating $\mathfrak m$, but we do not 
assume that $v(y_i) = a_i$. 

Our first result in this section shows that the Herzog--Kunz 
sequence $a_1, \ldots, a_n$ nevertheless is present in 
$y_1, \ldots, y_n$. 

\begin{proposition}\label{prop:optimal_part}
After possibly reordering the elements $y_1, \ldots, y_n$ 
there exist elements $x_1, \ldots, x_n \in R$ and 
$z_1, \ldots, z_n \in R$ with the following properties 
\begin{enumerate}
\item $v(x_i) = a_i$. 
\item $z_i \in k[x_1, \ldots, x_{i-1}]$ (with $z_1 = 0$).
\item $y_i = x_i + z_i$. 
\end{enumerate}
 for all $i = 1, \ldots , n$
\end{proposition}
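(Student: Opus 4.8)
The plan is to build the $x_i$ and $z_i$ inductively on $i$, while simultaneously reordering the $y_i$ so that their valuations behave compatibly with the Herzog--Kunz sequence. The guiding principle is Theorem~\ref{thm:exist_unique}: a system of elements of $R$ whose valuations are exactly $a_1 < \cdots < a_n$ automatically generates $R$, and any element of valuation lying in $v(\mathfrak m^2)$ can be ``corrected away'' into strictly higher valuation using the argument in the proof of Lemma~\ref{lem:frac_ideal_HK}. So the strategy is: at stage $i$, having already produced $x_1, \ldots, x_{i-1}$ with $v(x_j) = a_j$ and having arranged $y_1, \ldots, y_{i-1}$ in the desired form, I want to find among the remaining $y_i, \ldots, y_n$ one whose leading term ``sees'' the value $a_i$, peel off a polynomial correction $z_i \in k[x_1, \ldots, x_{i-1}]$, and be left with $x_i := y_i - z_i$ of valuation exactly $a_i$.

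The key mechanism is Corollary~\ref{cor:min_ai+1} and the splitting~\eqref{eq:split_val}: for any $f \in R$ with $v(f) \leq l$, writing $i_0 = \max\{i : a_i \leq l\}$, we have $f = p + g$ with $p \in k[x_1, \ldots, x_{i_0}]$ and $v(g) \geq l+1$. Now consider the $y_j$ with $j \geq i$ (relative to the current ordering). I claim that not all of them can have the property that, after subtracting off a polynomial in $x_1, \ldots, x_{i-1}$, the remainder has valuation $> a_i$ --- for if they did, then $y_1, \ldots, y_n$ together would generate only elements of $R$ whose ``new'' valuations (beyond $a_{i-1}$) all exceed $a_i$, contradicting that $a_i \in v(\mathfrak m) \setminus v(\mathfrak m^2)$ and that $y_1, \ldots, y_n$ generate $\mathfrak m$. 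More precisely: since $y_1, \ldots, y_n$ minimally generate $\mathfrak m$ and hence $R$, and since $a_i \in V(R) \setminus V(k[[x_1, \ldots, x_{i-1}]])$ by Corollary~\ref{cor:min_ai+1}, there must exist some $\mathbb{Z}$-linear-over-$R$ combination of the $y_j$ realizing valuation $a_i$; by the splitting argument and the fact that $a_i$ is a minimal such new value, one of the $y_j$ ($j \geq i$) itself must contribute a term of valuation exactly $a_i$ that is not accounted for by $k[x_1, \ldots, x_{i-1}]$. Reorder so that this $y_j$ becomes $y_i$, apply~\eqref{eq:split_val} with $l = a_i$ to write $y_i = z_i + (\text{valuation } a_i \text{ part}) + (\text{higher})$ where $z_i \in k[x_1, \ldots, x_{i-1}]$, and set $x_i := y_i - z_i$, which then has $v(x_i) = a_i$.

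The main obstacle I anticipate is making the selection-and-reordering step rigorous: one must verify that after choosing $y_i$ and subtracting $z_i$, the element $x_i$ genuinely has valuation $a_i$ (not something larger), and that the remaining elements $y_{i+1}, \ldots, y_n$ can still be processed --- i.e., that the induction does not ``run out'' of suitable $y_j$'s before reaching $i = n$. This is where minimality of the generating set $y_1, \ldots, y_n$ is essential: since $\dim_k(\mathfrak m / \mathfrak m^2) = n$ and the residues of $x_1, \ldots, x_{i-1}$ span an $(i-1)$-dimensional subspace, at stage $i$ the residues of $y_i, \ldots, y_n$ modulo $\mathfrak m^2$ must still be linearly independent modulo the span of $\bar x_1, \ldots, \bar x_{i-1}$, forcing at least one of them to have valuation outside $v(\mathfrak m^2) \cup \{a_1, \ldots, a_{i-1}\}$ after correction --- and the minimal such valuation is exactly $a_i$ by Corollary~\ref{cor:min_ai+1}. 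Tracking this linear-independence bookkeeping alongside the valuation bookkeeping is the delicate part; the rest is a routine application of the Herzog--Kunz splitting already established.
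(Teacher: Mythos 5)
Your proposal is correct and follows essentially the same route as the paper's proof: induct on $i$, use the splitting~\eqref{eq:split_val} of Corollary~\ref{cor:min_ai+1} to write each remaining $y_j$ as a polynomial in $k[x_1,\ldots,x_{i-1}]$ plus a remainder of valuation at least $a_i$, observe that since $a_i\in V(R)\setminus V(k[[x_1,\ldots,x_{i-1}]])$ and the remainders together with $x_1,\ldots,x_{i-1}$ still generate $R$, at least one remainder has valuation exactly $a_i$, then reorder and take that remainder as $x_i$. Two small points: the splitting should be applied with $l=a_i-1$ (so that $i_0=i-1$ and the remainder has valuation $\geq a_i$), not $l=a_i$, which would absorb the valuation-$a_i$ part into the polynomial; and the linear-independence bookkeeping modulo $\mathfrak m^2$ that you flag as delicate is not actually needed, since the generation argument alone guarantees a suitable $y_j$ at every stage.
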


\begin{proof}
We may assume that $v(y_1) = a_1$ and we set 
	$$ x_1 = x_1^{(1)} = y_1, \quad z_1 = z_1^{(1)} = 0 $$
Inductively we will construct for $k = 2, \ldots n$ elements 
$x_k^{(k)}, \ldots, x_n^{(k)}$and $z_k^{(k)}, \ldots, 
z_n^{(k)}$ with the following properties 
\begin{enumerate}
\item $v(x_i^{(k)}) \geq a_k$ for $i =k, \ldots, n$. 
\item $v(x_k^{(k)}) = a_k$. 
\item $z_i^{(k)} \in k[x_1, \ldots, x_{k-1}] $ for  
$i = k, \ldots n$. 
\item After possibly reordering $y_k, \ldots, y_n$, 
$y_i = x_i^{(k)} + z_i^{(k)}$ for  $i = k, \ldots n$.
\end{enumerate}
Then setting $x_i = x_i^{(i)}$ and $z_i = z_i^{(i)}$ for 
$i = 2, \ldots, n$ will complete the proof. 

Case $k = 2$: By Corollary~\ref{cor:min_ai+1} we can write 
	$$ y_i = p_i + g_i $$
for some $p_i \in k[x_1^{(1)}]$ and some $g_i \in R$ with  
$v(g_i) \geq a_2$ for each $i = 2, \ldots, n$. Then 
	$$ k[[y_1, \ldots, y_n]] = k[[x_1^{(1)} , g_2, \ldots, g_n]] $$
and therefore for at least one $i \in \{2, \ldots, n\}$ we 
must have $v(g_i) = a_2$. We may assume this to be $g_2$, 
and we set 
	$$ z_i^{(2)} = p_i, \quad 
	x_i^{(2)} = g_i $$

For the inductive step we note that the inductive hypothesis 
implies 
	$$ R = k[[x_1^{(1)}, \ldots, x_{k-1}^{(k-1)}, y_k, 
    \ldots, y_n]] $$
Using Corollary~\ref{cor:min_ai+1} again, we get 
	$$ y_i = p_i + g_i $$
with $p_i \in k[x_1^{(1)}, \ldots, x_{k-1}^{(k-1)}]$ and 
$g_i \in R$ with $v(g_i) \geq a_i$, and we may proceed in 
exactly the same way as in step $k = 2$. 
\end{proof}

\begin{notiz} 
Proposition~\ref{prop:optimal_part} allows to construct the 
Herzog--Kunz sequence $(a_1, \ldots, a_n)$, starting with 
an arbitrary parametrization $R = k[[y_1, \ldots, y_n]] 
\subseteq k[[t]]$ of $R$: 

If $R = k[[t^6, t^9+t^{10}, 2 t^{19} +t^{20}+t^{41}]]$ is 
as in Example~\ref{ex:rep1}, then $y_1=t^6, 
y_2=t^{9}+t^{10}$ and $y_3 = 2t^{19}+t^{20}+t^{41}$ 
minimally generate $R$. Obviously $a_1=6$, as this is the 
smallest positive valuation in $R$ and $a_2=9$ as the value 
$9$ cannot be obtained with $y_1$ alone. However as $3\cdot 
a_1 = 2 \cdot a_2$, $y_2^2-y_1^3$ provides a relation to 
modify $y_3$, and in fact $x_3 = y_3-(x_2^2-y_1^3) = 
t^{41}$, hence $a_3 = 41$, and the construction is complete. 

As a byproduct we obtain that the numerical semigroup 
$V(R)$ is minimally generated by $6,9,19$ and $41$.
\end{notiz}

\begin{satz}\label{thm:rep_chg}
Let $R = k[[y_1, \ldots, y_n]]$ as above with $y_1, \ldots, 
y_n$ minimally generating $\mathfrak m$. Furthermore let 
$\widetilde{y_1}, \ldots, \widetilde{y_n} \in R$ be elements 
with 
	$$ y_i = \widetilde{y_i} \pmod{t^{a_n+1}} \quad \text{ 
    for all } i = 1, \ldots, n $$
Then 
	$$ R = k[[\widetilde{y_1}, \ldots, \widetilde{y_n} ]] $$
\end{satz}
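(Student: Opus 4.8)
The goal is to show that changing each minimal generator $y_i$ by something divisible by $t^{a_n+1}$ — equivalently, by an element of valuation $>a_n$ — does not change the ring they generate. The key input is Proposition~\ref{prop:msquare}: any element of $R$ of valuation $>a_n$ lies in $\mathfrak m^2$. So the plan is to write $\widetilde{y_i} = y_i + \delta_i$ where $\delta_i := \widetilde{y_i}-y_i \in R$ has $v(\delta_i) \geq a_n+1$, hence $\delta_i \in \mathfrak m^2$, and then argue that $R' := k[[\widetilde{y_1},\ldots,\widetilde{y_n}]]$ coincides with $R$.

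\textbf{Step 1: $R' \subseteq R$.} This is immediate since each $\widetilde{y_i} = y_i + \delta_i$ with $y_i,\delta_i \in R$ and $R$ is a complete subring of $k[[t]]$; so $k[[\widetilde{y_1},\ldots,\widetilde{y_n}]]\subseteq R$.

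\textbf{Step 2: $\widetilde{y_1},\ldots,\widetilde{y_n}$ minimally generate $\mathfrak m$ as an $R$-ideal, hence topologically generate $R$.} Since $\delta_i \in \mathfrak m^2$, the residue of $\widetilde{y_i}$ in $\mathfrak m/\mathfrak m^2$ equals that of $y_i$; as $y_1,\ldots,y_n$ minimally generate $\mathfrak m$, their residues form a $k$-basis of $\mathfrak m/\mathfrak m^2$, so the residues of $\widetilde{y_1},\ldots,\widetilde{y_n}$ do too. By Nakayama, $\widetilde{y_1},\ldots,\widetilde{y_n}$ generate $\mathfrak m$ as an ideal of $R$. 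It then follows that $R = k[[\widetilde{y_1},\ldots,\widetilde{y_n}]]$ in the following sense: since $R$ is $\mathfrak m$-adically complete, $R/\mathfrak m = k$, and $\widetilde{y_i}$ generate $\mathfrak m$, every element of $R$ is an $\mathfrak m$-adic limit of polynomials in the $\widetilde{y_i}$ with coefficients in $k$; concretely, one shows $R = k[\widetilde{y_1},\ldots,\widetilde{y_n}] + \mathfrak m^N$ for every $N$ by induction on $N$ (using $\mathfrak m^N = (\widetilde{y_1},\ldots,\widetilde{y_n})^N$), and passes to the limit.

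\textbf{The main obstacle — making "$R$ is topologically generated by the $\widetilde{y_i}$" precise as "$R=k[[\widetilde{y_1},\ldots,\widetilde{y_n}]]$".} The subtle point is that $R' = k[[\widetilde{y_1},\ldots,\widetilde{y_n}]]$ a priori only contains the power series in the $\widetilde{y_i}$, and one must check these fill out all of $R$. This is handled by Step~2: having shown $R = k[\widetilde{y_1},\ldots,\widetilde{y_n}] + \mathfrak m^N$ for all $N$ and that $R'$ is closed (being a complete local ring, finite over $k[[\widetilde{y_1}]]\supseteq k[[t^{a_1}]]$ after noting $v(\widetilde{y_1}) = v(y_1)$ since $v(\delta_1)>a_n\geq a_1$), one concludes $R \subseteq \overline{R'} = R'$. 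Combined with Step~1 this gives $R = R' = k[[\widetilde{y_1},\ldots,\widetilde{y_n}]]$. I expect the only real care needed is the bookkeeping that $R'$ is genuinely closed in the $t$-adic (equivalently $\mathfrak m$-adic) topology so that the limit argument lands inside $R'$; this follows from $R'$ being module-finite over the complete DVR-image $k[[\widetilde{y_1}]]$, or alternatively directly from $R'$ being a complete local Noetherian ring by construction.
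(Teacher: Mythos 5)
Your proof is correct and follows essentially the same route as the paper: both reduce to showing $v(y_i-\widetilde{y_i})>a_n$ forces $y_i-\widetilde{y_i}\in\mathfrak m^2$ via Proposition~\ref{prop:msquare}, and then conclude by Nakayama that the $\widetilde{y_i}$ generate $\mathfrak m$. The only difference is that you spell out the (standard) passage from ``generate $\mathfrak m$'' to ``topologically generate $R$ as a complete $k$-algebra,'' which the paper leaves implicit.
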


\begin{proof}
It suffices to show that $\widetilde{y_1}, \ldots, 
\widetilde{y_n}$ generate $\mathfrak m$. But as 
$v(y_i -\widetilde{y_i}) > a_n$ we have by 
proposition~\ref{prop:msquare} that $y_i - \widetilde{y_i} 
\in \mathfrak m^2$, hence
	$$ (y_1, \ldots, y_n) + \mathfrak m^2 = 
    (\widetilde{y_1}, \ldots, \widetilde{y_n}) 
	+ \mathfrak m^2 $$
and therefore by the Nakayama--Lemma $\widetilde{y_1}, 
\ldots, \widetilde{y_n}$ generate $\mathfrak m$. 
\end{proof}

\begin{Corollary}\label{cor:rep_trunc}
Let $R = k[[y_1, \ldots, y_n]] \subseteq \overline{R} 
= k[[t]]$ as above with $y_1, \ldots, y_n$ 
minimally generating $\mathfrak m$, let $d = 
\mathrm{max}\{a_n+1,c_R\}$ and let 
$\widetilde{y_i}$ be the truncation of $y_i$ at $t^d$. Then 
	$$ R =  k[[\widetilde{y_1}, \ldots, \widetilde{y_n} ]] $$
In particular $R$ can always be generated by polynomials in 
$t$.
\end{Corollary}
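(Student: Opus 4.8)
\textbf{Proof proposal for Corollary~\ref{cor:rep_trunc}.}

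The plan is to reduce this to Theorem~\ref{thm:rep_chg} by observing that the hypothesis $d \geq a_n + 1$ makes the truncated elements $\widetilde{y_i}$ admissible substitutes for the $y_i$. First I would note that, by construction, $\widetilde{y_i}$ agrees with $y_i$ modulo $t^d$, so in particular $y_i \equiv \widetilde{y_i} \pmod{t^{a_n+1}}$ since $d \geq a_n+1$. Applying Theorem~\ref{thm:rep_chg} directly then yields $R = k[[\widetilde{y_1}, \ldots, \widetilde{y_n}]]$, giving the first assertion immediately. The role of the conductor degree $c_R$ in the definition of $d$ is only relevant for the ``polynomial'' conclusion, not for the ring equality itself.

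For the final sentence, I would argue as follows. Each $\widetilde{y_i}$ is by definition a polynomial in $t$ (its truncation at $t^d$), so $R = k[[\widetilde{y_1}, \ldots, \widetilde{y_n}]]$ exhibits $R$ as generated, as a complete local $k$-algebra, by polynomials in $t$. One subtlety worth addressing explicitly: the ambient power series ring $k[[t]]$ is the normalization $\overline{R}$, and we want to be sure that $R$ — a complete ring — really is the closure of the $k$-subalgebra generated by these polynomials, which is exactly what the notation $k[[\widetilde{y_1}, \ldots, \widetilde{y_n}]]$ means and what Theorem~\ref{thm:rep_chg} delivers. So no additional work is needed beyond invoking that theorem.

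The only mild obstacle is making sure the inequality goes the right way: we need $d \geq a_n + 1$ so that congruence modulo $t^d$ implies congruence modulo $t^{a_n+1}$, which forces $v(y_i - \widetilde{y_i}) \geq d \geq a_n + 1 > a_n$, exactly the hypothesis of Theorem~\ref{thm:rep_chg} (via Proposition~\ref{prop:msquare}, $y_i - \widetilde{y_i} \in \mathfrak{m}^2$). Since $d = \max\{a_n+1, c_R\}$ is by definition at least $a_n+1$, this is automatic, and the appearance of $c_R$ merely records that one may truncate even more aggressively at the conductor degree if desired without changing the ring (indeed beyond degree $c_R$ everything lies in $\mathfrak{C}_R \subseteq R$ already). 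Thus the proof is a short deduction from Theorem~\ref{thm:rep_chg}, and I do not anticipate any genuine difficulty.
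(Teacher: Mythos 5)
Your reduction to Theorem~\ref{thm:rep_chg} is the right (and the paper's intended) route, and the verification that $d \geq a_n+1$ forces $v(y_i - \widetilde{y_i}) > a_n$ is correct. However, your assertion that ``the role of the conductor degree $c_R$ in the definition of $d$ is only relevant for the polynomial conclusion, not for the ring equality itself'' is backwards, and it hides the one step you actually need to check. Theorem~\ref{thm:rep_chg} hypothesizes that $\widetilde{y_1}, \ldots, \widetilde{y_n}$ are elements \emph{of $R$}; a truncation of a power series in $R$ need not lie in $R$. The reason it does here is precisely that $d \geq c_R$: the discarded tail $y_i - \widetilde{y_i}$ has valuation at least $d \geq c_R$, hence lies in $\mathfrak{C}_R = t^{c_R}\overline{R} \subseteq R$, so $\widetilde{y_i} = y_i - (y_i - \widetilde{y_i}) \in R$. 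If $d$ were taken to be only $a_n+1$ with $a_n + 1 < c_R$, the truncations could land outside $R$, and then $k[[\widetilde{y_1},\ldots,\widetilde{y_n}]]$ would be a different subring of $k[[t]]$ and could not equal $R$. You do write the key containment ``beyond degree $c_R$ everything lies in $\mathfrak{C}_R \subseteq R$'' in your final parenthesis, but you attach it to the wrong purpose (truncating ``more aggressively''); note that $\max\{a_n+1, c_R\}$ truncates \emph{less} aggressively than either bound alone. The polynomial conclusion, by contrast, needs nothing from $c_R$: truncations are polynomials by definition. So the argument is salvageable with the pieces you already have, but the membership $\widetilde{y_i} \in R$ must be stated and justified as above before Theorem~\ref{thm:rep_chg} can be invoked.
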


\begin{notiz}
There are various results available in the literature of a 
flavor similar to Theorem~\ref{thm:rep_chg} resp., 
Corollary~\ref{cor:rep_trunc} (cf. \cite[(1.1)]{CC}, 
or \cite[(2.1)]{Ca}). Note however that (for truncation)
$d = \mathrm{max}\{a_n+1, c_R\}$ is the best bound possible, 
and in particular $c_R$ is not valid in general. For this 
consider 
	$$ R = k[[t^5,t^7,t^{12}+t^{23}]] $$
This ring has value semigroup $V(R) = <5,7,23>$ and its 
Herzog--Kunz sequence is $a_1 = 5, a_2 = 7$ and $a_3 = 23$. 
Furthermore $c_R = 19$, but 
	$$ R' = k[[t^5, t^7,t^{12}]] \subsetneq R $$ 
The reason for this is that $V(R') = <5,7>\,$ with conductor 
degree $c_{R'} = 24$, so $t^{23}$ cannot be reinserted 
anymore.  The conductor degree of the resulting 
curve singularity might change by truncating the generators 
in degree $t^{c_R}$.
\end{notiz}

\begin{notiz}
The bound $d$ in Corollary~\ref{cor:rep_trunc} cannot be 
calculated explicitely without determining the value 
semigroup of $R$ first. However any bound $c^{\star}$ 
for the conductor $c_R$ of $R$ will give the bound 
	$$ d^{\star} = c^{\star} + \mathrm{mult}(R) - 1 $$
where $\mathrm{mult}(R) = a_1$ is the multiplicity of $R$, 
as necessarily 
	$$ a_n \leq c_R + \mathrm{mult}(R) - 1 $$ 
In \cite[\S 1]{CC}  Castellanos and Castellanos provide a 
bound $c^{\star}$ for $c_R$ in terms of the multiplicity 
sequence of $R$. 
\end{notiz}

\bibliographystyle{plain}
\bibliography{bibliography}

\begin{thebibliography}{10}

\bibitem{barucci2006numerical}
Valentina Barucci.
\newblock Numerical semigroup algebras.
\newblock In {\em Multiplicative Ideal Theory in Commutative Algebra: A Tribute
  to the Work of Robert Gilmer}, pages 39--53. Springer, 2006.

\bibitem{barucci2010}
Valentina Barucci.
\newblock Decompositions of ideals into irreducible ideals in numerical
  semigroups.
\newblock {\em Journal of Commutative Algebra}, 2(3):281--294, 2010.

\bibitem{barucci2000analytically}
Valentina Barucci, Marco D'Anna, and Ralf Fr{\"o}berg.
\newblock Analytically unramified one-dimensional semilocal rings and their
  value semigroups.
\newblock {\em Journal of Pure and Applied Algebra}, 147(3):215--254, 2000.

\bibitem{barucci2009propinquity}
Valentina Barucci et~al.
\newblock On propinquity of numerical semigroups and one-dimensional local
  cohen macaulay rings.
\newblock {\em Commutative algebra and its applications}, pages 49--60, 2009.

\bibitem{barucci1997maximality}
Valentina Barucci and Ralf Fr{\"o}berg.
\newblock Maximality properties for one-dimensional analytically irreducible
  local gorenstein and kunz rings.
\newblock {\em Mathematica Scandinavica}, pages 149--160, 1997.

\bibitem{bhardwaj2024reduced}
Om~Prakash Bhardwaj.
\newblock Reduced type of certain numerical semigroup rings.
\newblock {\em arXiv preprint arXiv:2406.15923}, 2024.

\bibitem{CC}
Angel Castellanos and Julio Castellanos.
\newblock Algorithm for the semigroup of a space curve singularity.
\newblock In {\em Semigroup Forum}, volume~70, pages 44--60. Springer, 2005.

\bibitem{Ca}
Julio Castellanos.
\newblock The semigroup of a space curve singularity.
\newblock {\em Pacific journal of mathematics}, 221(2):227--251, 2005.

\bibitem{celikbas2024semidualizing}
Ela Celikbas, Hugh Geller, and Toshinori Kobayashi.
\newblock Semidualizing modules over numerical semigroup rings.
\newblock {\em International Journal of Algebra and Computation},
  34(05):607--637, 2024.

\bibitem{d1997canonical}
Marco D'Anna.
\newblock The canonical module of a one-dimensional reduced local ring.
\newblock {\em Communications in Algebra}, 25(9):2939--2965, 1997.

\bibitem{Ei}
David Eisenbud.
\newblock {\em Commutative algebra: with a view toward algebraic geometry},
  volume 150.
\newblock Springer Science \& Business Media, 2013.

\bibitem{endo2022ulrich}
Naoki Endo and Shiro Goto.
\newblock Ulrich ideals in numerical semigroup rings of small multiplicity.
\newblock {\em Journal of Algebra}, 611:435--479, 2022.

\bibitem{Fr}
R.~Fr\"oberg.
\newblock Curves and semigroups.
\newblock available at
  \url{https://staff.math.su.se/shapiro/ProblemSolving/curves\%20and\%20semigroups.pdf}.

\bibitem{goto2018pseudo}
Shiro Goto, Naoyuki Matsuoka, Hoang Le~Truong, et~al.
\newblock Pseudo-frobenius numbers versus defining ideals in numerical
  semigroup rings.
\newblock {\em Journal of Algebra}, 508:1--15, 2018.

\bibitem{herzog2023tiny}
J{\"u}rgen Herzog, Shinya Kumashiro, and Dumitru~I Stamate.
\newblock The tiny trace ideals of the canonical modules in cohen-macaulay
  rings of dimension one.
\newblock {\em Journal of Algebra}, 619:626--642, 2023.

\bibitem{HK}
J{\"u}rgen Herzog and Ernst Kunz.
\newblock Die wertehalbgruppe eines lokalen rings der dimension 1.
\newblock In {\em Die Wertehalbgruppe eines lokalen Rings der Dimension 1},
  pages 3--43. Springer, 1971.

\bibitem{HMM}
Craig Huneke, Sarasij Maitra, and Vivek Mukundan.
\newblock Torsion in differentials and berger’s conjecture.
\newblock {\em Research in the Mathematical Sciences}, 8(4):60, 2021.

\bibitem{SH}
Craig Huneke and Irena Swanson.
\newblock {\em Integral closure of ideals, rings, and modules}, volume~13.
\newblock Cambridge University Press, 2006.

\bibitem{kobayashi2025set}
Toshinori Kobayashi and Shinya Kumashiro.
\newblock The set of trace ideals of curve singularities.
\newblock {\em Israel Journal of Mathematics}, 265(1):153--184, 2025.

\bibitem{Kunz70}
Ernst Kunz.
\newblock The value-semigroup of a one-dimensional gorenstein ring.
\newblock {\em Proceedings of the American Mathematical Society},
  25(4):748--751, 1970.

\bibitem{KD}
Ernst Kunz.
\newblock {\em K{\"a}hler differentials}.
\newblock Springer, 1986.

\bibitem{maitra2023extremal}
Sarasij Maitra and Vivek Mukundan.
\newblock Extremal behavior of reduced type of one dimensional rings.
\newblock {\em arXiv preprint arXiv:2306.17069}, 2023.

\bibitem{MM}
Sarasij Maitra and Vivek Mukundan.
\newblock Valuations and nonzero torsion in module of differentials.
\newblock {\em Bulletin des Sciences Math{\'e}matiques}, 187:103287, 2023.

\bibitem{maitra2024two}
Sarasij Maitra and Vivek Mukundan.
\newblock Two criteria for quasihomogeneity.
\newblock {\em Proceedings of the American Mathematical Society},
  152(06):2369--2375, 2024.

\bibitem{matsuoka1971degree}
Tadayuki Matsuoka.
\newblock On the degree of singularity of one-dimensional analytically
  irreducible noetherian local rings.
\newblock {\em Journal of Mathematics of Kyoto University}, 11(3):485--494,
  1971.

\bibitem{oneto2007invariants}
Anna Oneto and Elsa Zatini.
\newblock Invariants associated with ideals in one-dimensional local domains.
\newblock {\em Journal of Algebra}, 316(1):32--53, 2007.

\bibitem{rosales2009irreducible}
Jos{\'e}~Carlos Rosales and PA~Garc{\'\i}a-S{\'a}nchez.
\newblock Irreducible numerical semigroups.
\newblock In {\em Numerical Semigroups}, pages 33--55. Springer, 2009.

\bibitem{stamate2016betti}
Dumitru~I Stamate.
\newblock Betti numbers for numerical semigroup rings.
\newblock In {\em The 24th National School on Algebra}, pages 133--157.
  Springer, 2016.

\bibitem{Zariskibook}
Oscar Zariski.
\newblock {\em The moduli problem for plane branches}.
\newblock American Mathematical Soc., 2006.

\end{thebibliography}
\end{document}